\DeclareMathOperator{\Cone}{Cone}
\DeclareMathOperator{\Hom}{Hom}
 \DeclareMathOperator{\free}{free}
\newcommand{\Zo}{\mathbb{Z}}
\newcommand{\Ro}{\mathbb{R}}
\newcommand{\Rg}{\mathbb{R}_{\geqslant 0}}
\newcommand{\Co}{\mathbb{C}}
\newcommand{\Talg}{\mathbb{C}^{\times}}
\newcommand{\ct}{\tilde{c}}
\newcommand{\T}{\mathcal{T}}
\newcommand{\Hr}{\widetilde{H}}
\newcommand{\dd}{\partial}
\newcommand{\F}{\mathcal{F}}
\newcommand{\CP}{\mathbb{C}P}
\newcounter{stmcounter}[section]
\numberwithin{equation}{section}
\theoremstyle{plain}
\newtheorem{cor}[stmcounter]{Corollary}
\newtheorem{thm}[stmcounter]{Theorem}
\newtheorem{prop}[stmcounter]{Proposition}
\newtheorem{lem}[stmcounter]{Lemma}
\theoremstyle{definition}
\newtheorem{defin}[stmcounter]{Definition}
\theoremstyle{remark}
\newtheorem{ex}[stmcounter]{Example}
\newtheorem{rem}[stmcounter]{Remark}
\newtheorem{con}[stmcounter]{Construction}
\begin{document}

\title{Torus actions of complexity one and their local properties}

\author{Anton Ayzenberg}
\address{Faculty of computer science, Higher School of Economics and Steklov Mathematical Institute, Moscow, Russia}
\email{ayzenberga@gmail.com}

\date{\today}
\thanks{This work is supported by the Russian Science
Foundation under grant 14-11-00414.}

\subjclass[2010]{Primary 55R25, 57N65 ; Secondary 55R40, 55R55,
55R91, 57N40, 57N80, 57S15}

\keywords{torus action, torus representation, Grassmann manifold,
complete flag manifold, quasitoric manifold, bundle
classification, Hopf bundle, sponge}

\begin{abstract}
We consider an effective action of a compact $(n-1)$-torus on a
smooth $2n$-manifold with isolated fixed points. We prove that
under certain conditions the orbit space is a closed topological
manifold. In particular, this holds for certain torus actions with
disconnected stabilizers. There is a filtration of the orbit
manifold by orbit dimensions. The subset of orbits of dimensions
less than $n-1$ has a specific topology which we axiomatize in the
notion of a sponge. In many cases the original manifold can be
recovered from its orbit manifold, the sponge, and the weights of
tangent representations at fixed points.
\end{abstract}

\maketitle

\section{Introduction}\label{secIntro}

An action of a compact torus $G$ on a topological space $X$ is a
classical object of study~\cite{Br}. For a point $x\in X$ let
$G_x\subset G$ denote the stabilizer subgroup and $Gx$ the orbit
of $x$. Let $p\colon X\to X/G$ be the projection to the orbit
space. Let $S(G)$ denote the set of all closed subgroups of $G$
endowed with the lower interval topology. There is continuous map
\[
\tilde{\lambda}\colon X/G\to S(G)
\]
which maps an orbit $x\in X/G$ to the stabilizer subgroup $G_x$,
see \cite{BuchRay}.

The classical idea in the study of torus actions is the following.
It is assumed that the projection map $p\colon X\to X/G$ admits a
section. Then, given the orbit space $Q=X/G$, and the continuous
map $\tilde{\lambda}\colon Q\to S(G)$ one builds a topological
model
\[
X_{(Q,\tilde{\lambda})}=(Q\times G)/\sim
\]
which is equivariantly homeomorphic to the original space $X$. The
method of constructing model spaces was used by Davis and
Januszkiewicz \cite{DJ} for the classification of manifolds which
are now called quasitoric \cite{BPnew}. This idea traces back to
the works of Vinberg \cite{Vin}.

The method can be naturally extended to the locally standard
actions of $G\cong T^n$ on $2n$-manifolds \cite{Yo}. In this case
the projection may not admit the global section, however, it
always admits a section locally, and there exists a topological
model space of such action.

Buchstaber--Terzi\'{c} \cite{BTober,BT,BT2} introduced a theory of
$(2n,k)$-manifolds in order to study the orbit spaces of more
general torus actions and to obtain topological models for such
actions. Grassmann manifolds and flag manifolds are important
families of $(2n,k)$-manifolds. In this theory a manifold is
subdivided into strata $X_\sigma$, so that the action has the same
stabilizer $T_\sigma$ for all points of a stratum. It is essential
in the definition of $(2n,k)$-manifold that there is a convex
polytope $P^k$ and a $T^k$-equivariant generalized moment map
$X^{2n}\to P^k$. Every stratum $X_\sigma$ is then represented as a
principal $T/T_x$-bundle over the product $P_\sigma^\circ\times
M_\sigma$, where $P_\sigma$ is a certain subpolytope of $P$ and
$M_\sigma$ is an auxiliary space of dimension $2(n-k)$ called the
space of parameters. Therefore the orbit space $X^{2n}/T^k$ is
represented as the union $\bigsqcup_\sigma P_\sigma\times
M_\sigma$. The theory of $(2n,k)$-manifolds provided the specific
methods to describe the topology of this union.

Whenever a compact $k$-torus acts effectively on a $2n$-manifold
we call the number $n-k$ the \emph{complexity of the action}.
While actions of complexity zero are well studied, the actions of
positive complexity constitute a harder problem. It is generally
assumed that the actions of complexity $\geqslant 2$ are extremely
complicated in general. The actions of complexity one take an
intermediate position: they were studied from several different
viewpoints. Algebraical theory of complexity one actions was
developed in the works of many authors, in particular, the
classification of such actions even in nonabelean case was given
by Timash\"{e}v \cite{Tim,Tim2}. Hamiltonian complexity one
actions on symplectic manifolds are also well studied: see e.g.
the work of Karshon--Tolman \cite{KT} and references therein.
Circle action on a $4$-manifold is a classical subject, see e.g.
\cite{ChL,Fint,OR}.

In this paper we study complexity one actions from the topological
viewpoint. Our approach is different from the one used in
\cite{BuchRay} and \cite{BT}. Instead of trying to stratify the
manifold so that the action on each stratum admits a section, we
partition the manifold by orbit types. Under two restrictions we
prove that the orbit space $Q=X/T$ is a topological manifold, see
Theorem \ref{thmManifold} for the precise statement. Note that for
this result it is not required that the stabilizers of the action
are connected. Such restriction was imposed in the theory of
$(2n,k)$-manifolds, however there exist natural examples of the
actions which have finite stabilizers but still the orbit space is
a manifold.

We make a remark on the main difference from situations considered
in toric topology: the typical action of complexity one does not
admit a section, even locally.  

Natural examples of complexity one actions which we keep in mind
are the following.
\begin{enumerate}
\item The $T^3$ action on the complex Grassmann manifold
$G_{4,2}$.
\item The $T^2$ action on the manifold $F_3$ of full complex flags in
$\Co^3$.
\item Quasitoric manifolds $X_{(P,\Lambda)}^{2n}$ with the induced action of
a generic subtorus $T\subset G$, $\dim T=n-1$.
\item The space of isospectral periodic tridiagonal Hermitian
matrices of size $n\geqslant 3$.
\end{enumerate}
Using the theory of $(2n,k)$-manifolds, Buchstaber--Terzi\'{c}
proved that the orbit space of the Grassmann manifold $G_{4,2}$ is
$S^5$, and the orbit space of the flag manifold $F_3$ is $S^4$.
These two examples motivated our study. In
Theorem~\ref{thmQToricReduc} we prove that the orbit space of a
quasitoric manifold by the action of $T$ is also homeomorphic to a
sphere $S^{n+1}$.

A space of isospectral tridiagonal $n\times n$-matrices is a more
interesting object. This space will be studied in details in the
subsequent paper \cite{AyzMatr}. This space depends on the
spectrum and for some degenerate spectra it is not smooth.
However, if it is a smooth manifold, we will prove that its orbit
space is $S^4\times T^{n-3}$. In \cite{AyzMatr} we describe
non-free part of the torus action using the regular permutohedral
tiling of the space. This allows to understand the topology of the
whole space, not just its orbit space.

The study of the space of periodic tridiagonal matrices raised
several questions about actions of complexity one. One of the
questions is the topological classification of such actions. In
this paper we prove that under certain restrictions the space $X$
with complexity one action is determined by the orbit manifold
$Q=X/T$, the set of non-free orbits $Z\subset Q$, and the weights
of tangent representations at fixed points. See Theorem
\ref{thmClassSphere} and Proposition~\ref{propSpecialHomeo}. The
set of non-free orbits have a specific topology which we
axiomatized in the notion of \emph{sponge}. Sponges seem to be the
objects of independent interest.

\section{Appropriate actions of complexity
one}\label{secGeneralDefins}

In the following, $T$ usually denotes the compact torus of
dimension $n-1$ and $G$ denotes compact tori of other dimensions.
We refer to the classical monograph of Bredon \cite{Br} for
general information of group actions on manifolds.

Let us specify the type of actions to be considered in the paper.
For a smooth action of $G$ on a smooth manifold $X$ define the
\emph{fine partition} on $X$ by orbit types
\[
X=\bigsqcup_{H\in S(G)}X^H.
\]
Here $H$ runs over all closed subgroups of $G$ and
$X^H=\tilde{\lambda}^{-1}(H)=\{x\in X\mid G_x=H\}$.

\begin{defin}
An effective action of $G$ on a compact smooth manifold $X$ is
called \emph{appropriate} if
\begin{itemize}
\item the fixed points set $X^G$ is finite;
\item (adjoining condition) the closure of every connected component of a
partition element $X^H$, $H\neq G$, contains a point $x'$ with
$\dim G_{x'}>\dim H$.
\end{itemize}

If, moreover, the stabilizer subgroup of every point is a torus,
we call the action \emph{strictly appropriate}.
\end{defin}

\begin{rem}
The adjoining condition implies that whenever a subset $X^H$ is
closed in the topology of $X$, then it is the fixed point set
$X^G$.
\end{rem}

\begin{rem}
A subgroup $H$ of a torus has the form $H_t\times H_f$, where
$H_t$ is a torus and $H_f$ is a finite abelian group. For strictly
appropriate actions the finite components $H_f$ of all stabilizers
vanish. In other words, a strictly appropriate action is an action
with all stabilizers being connected.
\end{rem}

\begin{ex}
Let an algebraical torus $(\Talg)^k$ act algebraically on a smooth
variety $X$ with finitely many fixed points. Then the induced
action of a compact subtorus $T^k\subset (\Talg)^k$ on $X$ is
appropriate, as follows from Bialynicki-Birula method \cite{BB}.
Indeed, for a given point $x\in X\setminus X^T$ consider the
1-dimensional algebraical torus $\Talg\subset (\Talg)^k$ which
acts on $x$ nontrivially. Consider the point $x'=\lim_{t\to 0}tx$,
where $0<t\leqslant 1$, $t\in \Co^\times$. The point $x'$ is
connected with $x$ and has stabilizer of bigger dimension (since
$x$ is stabilized by $(\Talg)^k_x$ as well as by $\Co^\times$).
Iterating this procedure, we arrive at some fixed point.

In particular, the action of a compact torus on a complex
GKM-manifold (see \cite{GKM}) is appropriate.
\end{ex}

\begin{ex}
The effective action of $T^{n-1}$ on $F_n$, the manifold of
complete complex flags in $\Co^n$ is strictly appropriate. The
effective action of $T^{n-1}$ on a Grassmann manifold $G_{n,k}$ of
complex $k$-planes in $\Co^n$ is also strictly appropriate.
\end{ex}

\begin{ex}
Let the action of $G\cong T^n$ on a smooth manifold $X^{2n}$ be
locally standard (see definition in Section
\ref{secLocStandActions}). The orbit space $P=X^{2n}/G$ is a
manifold with corners. This action is appropriate whenever every
face of $P$ contains a vertex. If it is appropriate, then it is
strictly appropriate. In particular, quasitoric manifolds provide
examples of strictly appropriate torus actions.
\end{ex}

\begin{ex}
Let the action of $G$ on $X$ be appropriate, and the induced
action of a subtorus $T\subset G$ on $X$ has the same fixed points
set. Then the action of $T$ is also appropriate. Indeed, the
partition element $(X')^K$ of the $T$-action for $K\subseteq T$
have the form
\[
(X')^K=\bigcup_{H\subseteq G, H\cap T=K} X^H.
\]
Therefore the adjoining condition for $G$-action implies the
adjoining condition for the induced $T$-action.
\end{ex}

Now we restrict to actions of complexity one, that is to the case
$\dim T=n-1$, $\dim X=2n$. Let $x\in X^T$ be a fixed point, and
\[
\alpha_1,\ldots,\alpha_n\in N = \Hom(T,S^1)\cong \Zo^{n-1}
\]
be the weights of the tangent representation at $x$. This means,
\[
T_xX\cong V(\alpha_1)\oplus\cdots\oplus V(\alpha_n),
\]
where $V(\alpha)$ is the standard 1-dimensional complex
representation given by
\[
tz=\alpha(t)\cdot z,\quad z\in \Co.
\]
If there is no complex structure on $X$, then we have an ambiguity
in choice of signs of $\alpha_i$. These signs do not affect the
following definitions.

\begin{defin}
A representation of $T^{n-1}$ on $\Co^n$ is called in general
position if every $n-1$ of its $n$ weights are linearly
independent. An action of $T=T^{n-1}$ on $X=X^{2n}$ is called
\emph{an action in general position} if its tangent representation
at any fixed point is in general position.
\end{defin}

\begin{rem}
For a given $n$-tuple of weights $\alpha_1,\ldots,\alpha_n$ there
is a relation $c_1\alpha_1+\cdots+c_n\alpha_n=0$ in $N\cong
\Zo^{n-1}$. 
The action is in general position if $c_i\neq 0$ for
$i=1,\ldots,n$.
\end{rem}

\begin{thm}\label{thmManifold}
Consider an appropriate action of $T=T^{n-1}$ on $X=X^{2n}$ and
assume it is in general position. Then the orbit space $Q=X/T$ is
a topological manifold.
\end{thm}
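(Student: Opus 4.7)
The plan is to reduce the theorem, via the slice theorem, to a local computation of quotients of linear torus representations, to inherit the general position hypothesis from each fixed point using the adjoining condition, and to resolve the resulting linear model by explicit invariants.

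First I would apply the slice theorem: for every orbit $Tx\subset X$ with stabilizer $H=T_x$, a neighborhood of $Tx$ is equivariantly homeomorphic to $T\times_H V$ with $V$ the slice representation of $H$, hence a neighborhood of $p(x)\in Q$ is homeomorphic to $V/H$, reducing matters to showing each $V/H\cong\Ro^{n+1}$. To describe $V$ concretely, I would iterate the adjoining condition to pick a fixed point $x_0$ in the closure of the stratum through $x$ and identify a neighborhood of $x$ with an open subset of the tangent representation $T_{x_0}X\cong V(\alpha_1)\oplus\cdots\oplus V(\alpha_n)$. Writing $I=\{i\colon\alpha_i|_H=0\}$, the stratum through $x$ appears locally as $(\Talg)^I\times\{0\}$, and inspection of the orbit tangent yields an $H$-equivariant splitting $V=V_0\oplus V_1$, where $V_0$ is a trivial $H$-module of real dimension $n-1-\dim H$ (coming from $\Co^I$ modulo the orbit tangent) and $V_1=\bigoplus_{i\notin I}V(\alpha_i|_H)$ is a complex $H$-module of complex dimension $\dim H+1$.

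Next I would verify that $V_1$ satisfies general position for the $H$-action, i.e.\ any $\dim H$ of its weights are linearly independent over $\Ro$. A hypothetical relation $\sum_j c_j\alpha_{i_j}|_H=0$ with $i_j\notin I$ would force $\sum_j c_j\alpha_{i_j}\in H^\perp$, which equals the $\Ro$-span of the $|I|=n-1-\dim H$ weights $\{\alpha_i\}_{i\in I}$; this yields a nontrivial linear dependence among $n-1$ of the original weights $\alpha_1,\ldots,\alpha_n$, contradicting general position at $x_0$. Since $V/H=V_0\times(V_1/H)$, the theorem reduces to a single key lemma: \emph{for any effective linear action of $T^{m-1}$ on $\Co^m$ with weights in general position, $\Co^m/T^{m-1}\cong\Ro^{m+1}$.}

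For the key lemma I would let $\sum c_j\alpha_j=0$ be the primitive linear relation (with all $c_j\neq 0$), split $\{1,\dots,m\}=A\sqcup B$ according to the signs of the $c_j$, and put $P(z)=\prod_{j\in A}z_j^{c_j}\prod_{j\in B}\bar z_j^{-c_j}$. Then $P$ is $T^{m-1}$-invariant, and together with $r_j=|z_j|^2$ it gives a continuous proper map
\[
\phi\colon\Co^m/T^{m-1}\longrightarrow\bigl\{(r,P)\in\Rg^m\times\Co:|P|^2=\textstyle\prod_j r_j^{|c_j|}\bigr\}
\]
which I would prove is a bijection: on the principal stratum (all $r_j>0$) the phase $\arg P$ coincides with the primitive character of the cokernel $T^m/T^{m-1}\cong S^1$, while on a boundary stratum $z_{j_0}=0$ the remaining weights $\{\alpha_j\}_{j\neq j_0}$ span $(T^{m-1})^*\otimes\Ro$ by general position, so $T^{m-1}$ acts transitively on the remaining angle torus. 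Under the standard homeomorphism $\Rg^m\cong\Ro^{m-1}\times[0,\infty)$ sending $\partial\Rg^m$ to $\Ro^{m-1}\times\{0\}$, the image becomes $\Ro^{m-1}$ times the open cone on $S^1$, namely $\Ro^{m-1}\times\Ro^2=\Ro^{m+1}$. The main technical obstacle is the injectivity of $\phi$ along the boundary strata where $\arg P$ collapses; it rests on the same general-position spanning statement that powers the propagation step, and once in place, properness and continuity of $\phi$ deliver the homeomorphism and the theorem follows.
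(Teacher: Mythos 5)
Your key lemma ($\Co^m/T^{m-1}\cong\Ro^{m+1}$ for a representation in general position) is exactly the paper's Lemma \ref{lemLocalQuotient}, and your proof of it is the same argument in more explicit clothing: pass to the subtorus $T'=\{\prod t_j^{c_j}=1\}$, observe that the quotient maps onto $\Rg^m$ with circle fibers over the interior and point fibers over the boundary (this is where $c_j\neq0$ for all $j$ is used), and identify $\Rg^m\times S^1/\sim$ with $\Ro^{m-1}\times\Co$; your invariants $(r_j,P)$ just realize the circle coordinate concretely. Where you diverge is the globalization, and there are two genuine problems there. First, the adjoining condition does not give you ``a fixed point $x_0$ in the closure of the stratum through $x$'': it gives one point $x_1$ in the closure of the component of $X^H$ through $x$ with $\dim T_{x_1}>\dim H$, lying in some component of another stratum, and that component need not be contained in the closure of your original one, so iterating can land at a fixed point not adjacent to your stratum; the identification of a neighborhood of $x$ with an open subset of $T_{x_0}X$ then has no basis. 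The repair is the paper's descending induction on $\dim H$: the slice representation is locally constant on components of $X^H$, so one compares $x$ with points of its own component near $x_1$ and uses that $Q$ is already a manifold near $X^{H_1}/T$ --- no fixed point, and in fact no recomputation of $V/H$, is needed.

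Second, and more damaging to your route as written: the theorem assumes only an appropriate action, so stabilizers may be disconnected, and in the linear model they genuinely are --- by Lemma \ref{lemPMone} the stabilizer of a generic point of a coordinate subspace has a finite component of order $\gcd(c_i\mid i\in I)$, nontrivial unless the action is strictly appropriate. Your reduction sends every slice to the key lemma for a \emph{connected} torus $T^{m-1}$ acting on $\Co^m$, so it does not cover $V_1/H$ when $H$ has a finite part; worse, the monomial $P$ is invariant only under the identity component $H^0=\ker\chi$, so $\phi$ does not even descend to $\Co^m/H$ for disconnected $H$. This is fixable (quotient by $H^0$ first and then by the finite group $H/H^0$, or redo the fibered description over $\Rg^m$ with fiber $T^m/H$, which is still a circle), but it must be done: the disconnected case is precisely the extra generality the theorem claims. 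The paper's inductive matching of local models never faces these quotients directly, which is why its argument needs the explicit computation only at fixed points.
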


\begin{proof}
First we prove the local statement near fixed points.

\begin{lem}\label{lemLocalQuotient}
For a representation of $T=T^{n-1}$ on $\Co^n$ in general position
we have $\Co^n/T\cong \Ro^{n+1}$.
\end{lem}

\begin{proof}
Consider the standard action of $G=T^n$ on $\Co^n$ which rotates
the coordinates. The weights of the standard action
$e_1,\ldots,e_n$ is the standard basis of the character lattice
$\Hom(G,S^1)\cong \Zo^n$. Consider the lattice homomorphism
$\phi\colon \Zo^n\to N$ given by $\phi(e_i)=\alpha_i$,
$i=1,\ldots,n$. This homomorphism is induced by some homomorphism
$\phi^*\colon T\to G$ of tori. The given action of $T$ is the
composition of $\phi^*$ with the standard action.

So far we may assume that there is an action of a subtorus
$T'=f(T)\subset G$ where $G$ acts in a standard way. The torus
$T'$ is given by $\{t_1^{c_1}\cdots t_n^{c_n}=1\}$, where
$(c_1,\ldots,c_n)$ is a linear relation on the weights $\alpha_i$
and $\gcd\{c_i\}=1$. The condition of general position implies
that all $c_i\neq 0$. Hence the intersection of $T'$ with each
coordinate circle in $G$ is a finite subgroup.

Let us denote the space $\Co^n/T=\Co^n/T'$ by $Q$. We have the map
$g\colon Q\to \Co^n/G\cong \Rg^n$, which sends $T$-orbit to its
$G$-orbit. For every $p\in \Ro_>^n$ the preimage $g^{-1}(p)$ is a
circle $G/T'$. For every $p\in \dd\Rg^n$, the preimage $g^{-1}(p)$
is a single point, since the product of $T'$ with any nontrivial
coordinate subtorus generate the whole torus $G$. Therefore we
have $Q=\Rg^n\times S^1/\sim$, where $\sim$ collapses circles over
$\dd\Rg^n$. We have
\[
(\Rg^n\times S^1/\sim)\cong(\Ro^{n-1}\times\Rg\times
S^1)/\sim\cong \Ro^{n-1}\times\Co.
\]
which proves the lemma.
\end{proof}

%

%
%

We now prove the theorem by induction on the dimension of
stabilizer subgroup. If $\dim H=n-1$, that is $H=T$, Lemma
\ref{lemLocalQuotient} shows that $X/T$ is a manifold near the
fixed point set $X^T/T$. Now let $[x]\in X/T$ be an orbit such
that $T_x=H$, that is $x\in X^H$. Due to the adjoining condition,
there exists a point $x'$ such that the local representations at
$x$ and $x'$ coincide and $x'$ is close to a partition element
$X^{H'}$ with $\dim H'>\dim H$. Here by the local representation
we mean a representation of $T_x$ on the normal space
$T_xX/T_xT(x)$ to the orbit.

By induction, the space $X/T$ is a manifold near $X^{H'}/T$
therefore there exists a neighborhood of $[x']$ homeomorphic to
$\Ro^{n+1}$. Therefore there exists a neighborhood of $[x]$
homeomorphic to $\Ro^{n+1}$. Indeed, both neighborhoods are
homeomorphic to the orbit space of the local representation
according to slice theorem.
\end{proof}

\begin{rem}
Let $v_1,\ldots,v_{n-1}\in\Ro^{n-1}$ be the basis of a vector
space and $v_n=-v_1-\cdots-v_{n-1}$. Consider the subset $C$ of
$\Ro^{n-1}$ given by
\[
C=\bigcup_{I\subset [n],|I|=n-2} \Cone(v_i\mid i\in I).
\]
This subset is homeomorphic to the $(n-2)$-skeleton of the
standard nonnegative cone $\Rg^n$.

The subset $C$ is the $(n-2)$-skeleton of the simplicial fan
$\Delta_{n-1}$ of type $A_{n-1}$; it comes equipped with the
natural filtration
\[
C_0\subset\cdots\subset C_{n-2}=C
\]
where $C_k$ is the union of cones of dimension $k$ of
$\Delta_{n-1}$. This filtration can be defined topologically: we
say that $x\in \Ro^{n-1}$ has type $k$ if $C$ cuts a small disc
$B_x$ around $x$ into $n-k$ chambers. Then $C_k$ consists of all
points of type $\leqslant k$.
\end{rem}

Next we introduce a notion of the subspace in a topological
manifold which is locally modeled by the subset $C\subset
\Ro^{n-1}\subset \Ro^{n+1}$. Assume we are given a topological
manifold $Q$ of dimension $n+1$ and a subset $Z\subset Q$.

\begin{defin}
A subset $Z\subset Q$ is called a \emph{sponge} if, for any point
$x\in Z$, there is a neighborhood $U_x\subset Q$ such that
$(U_x,U_x\cap Z)$ is homeomorphic to $(V\times \Ro^2,(V\cap
C)\times\Ro^2)$, where $V$ is an open subset of the space
$\Ro^{n-1}\supset C$.
\end{defin}

Every sponge is filtered in a natural way compatible with the
filtration of $C$. We say that a point $x\in Z\subset Q$ has type
$k$ if $H^2(U_x\setminus Z;\Zo)\cong \Zo^{n-k-1}$ for a small disc
neighborhood $x\in U_x\subset Q$. Then $Z_k$ consists of all
points of type at most $k$. Note that $\dim Z_k=k$. Informally
speaking, the sponge set is a collection of $(n-2)$-manifolds with
corners, and the corners are stacked together like maximal cones
in $C$. The case $n=4$ is shown on Fig.\ref{figSponge}

\begin{figure}[h]
\begin{center}
\includegraphics[scale=0.2]{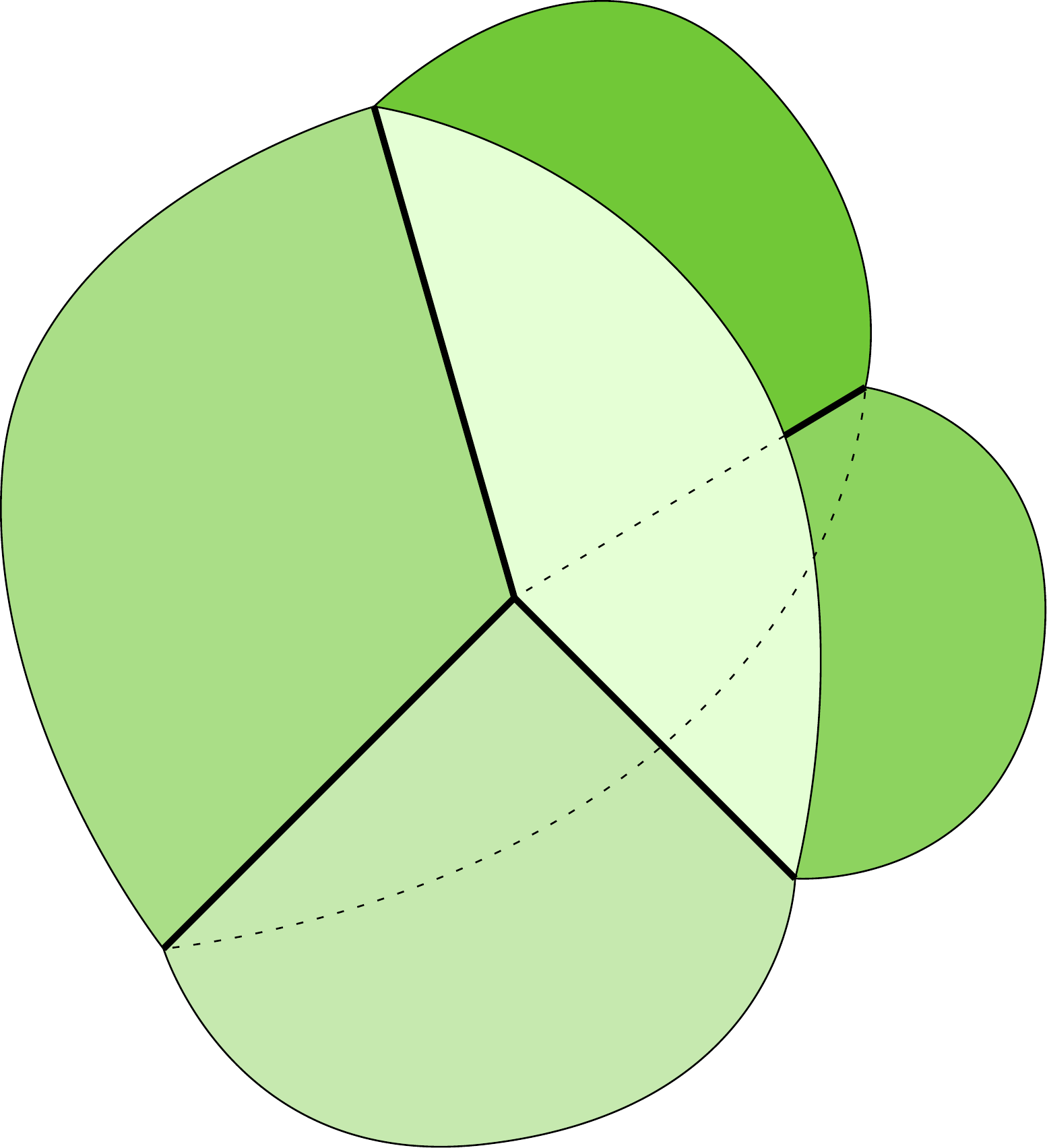}
\end{center}
\caption{Local structure of the sponge for
$n=4$.}\label{figSponge}
\end{figure}

\begin{con}
For a general action of a torus $G$, $\dim G=m$ on $X$ we can
consider the \emph{coarse filtration}:
\[
X_0\subset X_1\subset\cdots\subset X_m=X
\]
where $X_i=\bigcup_{\dim H\geqslant m-i}X^H$ is the union of all
orbits of dimension at most $i$. In particular, the set
$X\setminus X_{m-1}$ is the locus of almost free action (``almost
free'' action means ``have only finite stabilizers''). There is an
induced coarse filtration on $Q=X/G$:
\[
Q_0\subset Q_1\subset\cdots\subset Q_m
\]
\end{con}

\begin{rem}
The terms ``fine partition'' and ``coarse filtration'' refer to
the following fact. The fine partition distinguishes different
subgroups of the torus. However, coarse filtration distinguishes
only the dimensions of the subgroups.
\end{rem}

\begin{prop}
For an appropriate action in general position of $T^{n-1}$ on
$X^{2n}$ we get a topological manifold $Q=X/T$. The coarse
filtration on $Q$ has the form
\[
Z_0\subset Z_1\subset\cdots\subset Z_{n-2}=Z\subset Q
\]
where $Z\subset Q$ is a sponge. The filtration by orbit dimensions
coincides with the sponge filtration defined topologically.
\end{prop}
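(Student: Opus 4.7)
The plan is to verify the sponge model locally around every orbit in $Z$ by combining the explicit description of $\Co^n/T$ from Lemma \ref{lemLocalQuotient} with the slice theorem and the adjoining condition. The manifold statement is already Theorem \ref{thmManifold}.

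For a fixed orbit $[x]\in X^T/T$, Lemma \ref{lemLocalQuotient} identifies a neighbourhood of $[x]$ in $Q$ with $\Co^n/T\cong\Ro^{n-1}\times\Co$, obtained from $g\colon \Co^n/T\to \Rg^n$ by collapsing circles over $\dd\Rg^n$. A point lying over the relative interior of a codimension-$|I|$ face of $\Rg^n$ has $T$-stabilizer $T\cap\prod_{i\in I}S^1$; the defining relation $\sum c_i\alpha_i=0$ with all $c_i\neq 0$ from general position forces this to have dimension $|I|-1$ for $|I|\geq 1$. Consequently, orbits in $Z$ correspond precisely to the $(n-2)$-skeleton of $\Rg^n$, which sits in $\dd\Rg^n\cong\Ro^{n-1}$ and under radial identification is homeomorphic to the subset $C$ from the preceding remark; this furnishes the sponge model at $[x]$.

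For a non-fixed orbit $[y]\in Z$ with stabilizer $H\subset T$ of positive dimension $h$, the slice theorem gives an equivariant neighbourhood $T\times_H N_y$, whose orbit space splits as $N_y/H=N_y^{\mathrm{triv}}\times(W/H)$ with $W$ the non-trivial part of the slice $H$-representation. Iterating the adjoining condition from the $X^H$-component of $y$ produces a fixed point $x$ in its closure, and an equivariant tubular neighbourhood of $X^H$ at $x$ identifies $W$ with $\bigoplus_{i:\alpha_i|_H\neq 0}V(\alpha_i)$ restricted to $H$. Setting $S=\{i:\alpha_i|_H=0\}$, the identity $H=\bigcap_{i\in S}\ker\alpha_i$ of dimension $h$ forces the $\alpha_i,\,i\in S$, to have rank $n-1-h$ in $\Hom(T,S^1)$; general position (any at most $n-1$ of the $\alpha_i$ independent) then gives $|S|=n-1-h$ and $|[n]\setminus S|=h+1$. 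For any $h$-subset $I'\subset[n]\setminus S$, the weights $\alpha_i,\,i\in I'\cup S$, number $n-1$ and are independent, so their restrictions to $H$ give $h$ independent weights in $\Hom(H,S^1)$; thus $W$ is a general-position representation of $H\cong T^h$ on $\Co^{h+1}$.

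Applying Lemma \ref{lemLocalQuotient} at this smaller scale yields $W/H\cong\Ro^h\times\Co$ with non-free locus the $(h-1)$-skeleton of $\Rg^{h+1}$, embedded in $\dd\Rg^{h+1}\cong\Ro^h$ as the smaller-torus analogue $C_H\subset\Ro^h$ of $C$. Taking the product with $N_y^{\mathrm{triv}}\cong\Ro^{n-h-1}$ presents the local pair $(Q,Z)$ at $[y]$ as $(\Ro^{n-h-1}\times\Ro^h\times\Co,\,(\Ro^{n-h-1}\times C_H)\times\{0\})$, matching the sponge model under the decomposition $\Ro^{n-1}=\Ro^{n-h-1}\times\Ro^h$ with $V\cap C=\Ro^{n-h-1}\times C_H$. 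The filtration claim then reduces to a combinatorial computation: a point in the relative interior of a $j$-dimensional cone of $C_H$ corresponds to an orbit of stabilizer dimension $h-j$ and orbit dimension $n-1-h+j$, and an Alexander-duality computation of the link of $Z$ gives the second cohomology of the disc complement of rank $h-j$, matching the prescribed topological type $k=n-1-h+j$. The main obstacle is the general-position descent for the slice representation $W$, which ties together the adjoining reduction, the tubular identification with a subrepresentation at the fixed point, and the lattice argument; once in hand, the rest is a direct unwinding of Lemma \ref{lemLocalQuotient} and the combinatorics of $C$.
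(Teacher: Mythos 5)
Your proposal follows the same route as the paper: the local model near fixed points is Lemma \ref{lemLocalQuotient}, and the general case is propagated via the slice theorem and the adjoining condition, exactly as the paper's (much terser) two-sentence proof indicates. You in fact supply details the paper omits --- notably the verification that the slice representation of $T_y$ at a lower-dimensional orbit is again a general-position representation of a torus $T^h$ on $\Co^{h+1}$ --- the only caveat being that, as in the paper itself, possible finite components of stabilizers (allowed for merely appropriate actions) are passed over silently.
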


\begin{proof}
The local statement near fixed points is proved in Lemma
\ref{lemLocalQuotient}. The global case follows by the adjoining
condition similar to the proof of Theorem \ref{thmManifold}.
\end{proof}

\section{Characteristic data}\label{secCharData}

Assume there is an appropriate action of $T=T^{n-1}$ in general
position on $X=X^{2n}$. We allow $X$ to have a boundary, however
in this case we require that the action is free on $\dd X$. The
same arguments as before show that $Q=X/T$ is a topological
manifold with boundary and its boundary $\dd Q$ is $\dd X/T$.

In this section we assume that the actions are strictly
appropriate. This means that there are no finite components in
stabilizer subgroups and therefore, the face partition of a coarse
filtration coincides with the fine partition on $Q$. With a
strictly appropriate action in general position we assign the
characteristic data $(Q,Z,\mu,e)$ consisting of the following
information
\begin{itemize}
\item $Q=X/T$, the orbit space.
\item $Z\subset Q^\circ$, the sponge subset determined by the
action:
\[
Z_0\subset Z_1\subset\cdots\subset Z_{n-2}=Z,
\]
where $Z_i\subset Q$ is the set of orbits of dimension at most
$i$. The closure of a connected component of $Z_i\setminus
Z_{i-1}$ is called a \emph{face} of $Z$ of dimension $i$. Faces of
dimension $n-2$ are called \emph{facets}. Every face of dimension
$i$ is contained in exactly ${n-i\choose 2}$ different facets. The
stabilizer remains the same for all points in the interior of any
given face $F$, since no finite components are allowed in the
stabilizers. This stabilizer will be denoted $T_F$. Let
$\F=\{F_1,\ldots,F_m\}$ be the set of facets of $Z$.

\item $\mu$ is a \emph{characteristic map}
\[
\mu\colon \F\to\{1\mbox{-dimensional subgroups of
}T^{n-1}\}=\Hom(S^1,T^{n-1})\cong \Zo^{n-1}
\]
It sends a facet $F_k$ to the oriented stabilizer subgroup
$T_{F_k}$ of any of its interior points (we introduce orientation
arbitrarily, see details in Section \ref{secOrientations}). For
any face $F$ of dimension $i$ we have $F=\bigcap_{i\in I}F_i$ for
certain subset $I\subset [m]$, $|I|={n-i\choose 2}$. The
stabilizer $T_F$ is the product of $\mu(F_i)=T_{F_i}$ inside the
torus $T^{n-1}$. Note that this product is generally not free,
since it has dimension $n-1-i$. However, it can be seen that
characteristic map $\mu$ determines the stabilizers of all points.

\item $e\in H^2(Q\setminus Z; H^2(BT))$ is the \emph{Euler class}
of the free part of the action. In details, every orbit in
$Q\setminus Z$ is full-dimensional and there are no finite
stabilizer subgroups, therefore the free part of the action is a
principal $T$-bundle $p\colon X^{\free}\to Q\setminus Z$. This
bundle is classified by the homotopy class of a map
\[
Q\setminus Z\to BT\cong (\CP^\infty)^{n-1}\cong K(\Zo^{n-1};2).
\]
Such maps also classify the second cohomology group of $Q\setminus
Z$. Therefore, we have the classifying element
\[
e\in H^2(Q\setminus Z;\Zo^{n-1}),\mbox{ where }\Zo^{n-1}\cong
H_2(BT;\Zo)\cong H_1(T;\Zo).
\]
\end{itemize}

\begin{rem}
Note that unlike the half-dimensional torus actions the
characteristic data of complexity one actions can not be
arbitrary. It will be shown in this and the next section that the
Euler class $e$ and the characteristic function $\mu$ determine
each other to much extent. Moreover, the Euler class of complexity
one actions is always nontrivial.
\end{rem}

Let $x\in Z\subset Q$ be a point of type $k\leqslant n-2$. Let
$U_x$ be a small neighborhood of $x$ in $Q$, homeomorphic to an
open disc. Let $i_x\colon U_x\to Q$ be the inclusion map. We have
an induced homomorphism
\[
H^2(Q\setminus Z;H_1(T))\to H^2(U_x\setminus Z;H_1(T))
\]
The image of $e\in H^2(Q\setminus Z;H_1(T))$ by this homomorphism
is denoted
\[
e_x\in H^2(U_x\setminus Z;H_1(T))\cong \Zo^{n-k-1}\otimes H_1(T)
\]
and called the local Euler class at $x$. Recall that the type of
the point is defined by the rank of the second cohomology of
$U_x\setminus Z$, see section \ref{secGeneralDefins}.

In particular, if $x$ has type $n-2$ (i.e. $x$ lies in the
interior of a facet), the neighborhood $U$ can be chosen in a way
that $U_x\cap Z\cong \Ro^{n-2}$. In this case we have
$U_x\setminus Z\cong \Ro^{n+1}\setminus \Ro^{n-2}$ and
\[
H^2(U_x\setminus Z;H_1(T))\cong H^2(\Ro^{n+1}\setminus
\Ro^{n-2};H_1(T))\cong H^1(T).
\]
The last isomorphism is canonical provided $Q$ (hence $U_x$) is
oriented.

\begin{defin}
The Euler class $e$ and characteristic function $\mu$ are called
\emph{compatible} if the following condition holds: for any $x\in
Z$, the map $H_1(T)\to H_1(T/T_x)$ induced by the quotient map
$T\to T/T_x$ sends $e_x\in H^1(T)$ to zero.
\end{defin}

\begin{prop}
Assume there is an appropriate action in general position of
$T=T^{n-1}$ on a manifold $X=X^{2n}$. Then its characteristic data
$e$ and $\mu$ are compatible.
\end{prop}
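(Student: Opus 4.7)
The plan is to prove compatibility locally at each $x\in Z$ by exhibiting the principal $T/T_x$-bundle obtained from the free $T$-bundle by quotienting out $T_x$ as the restriction of a trivial $T/T_x$-bundle defined on all of $U_x$. Since the local Euler class $e_x$ pushes forward, under $H_1(T)\to H_1(T/T_x)$, to the Euler class of this quotient bundle, the compatibility condition follows immediately.

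First I would fix a point $x\in Z\subset Q$ of type $k$, pick a representative orbit $Tx'\subset X$ with $T_{x'}=T_x$, and choose the disc neighborhood $U_x\subset Q$ so that $W\eqd p^{-1}(U_x)\subset X$ is the $T$-saturation of a slice through $Tx'$. By the slice theorem and strict appropriateness (which guarantees that stabilizers of nearby orbits are subtori of $T_x$), we have an equivariant homeomorphism $W\cong T\times_{T_x}V$, where $V$ is the normal representation of $T_x$ at $x'$. Choosing a topological splitting $T\cong T_x\times T'$ with $T'\cong T/T_x$, one checks directly that $W\cong T'\times V$ as $T$-spaces, with $T_x$ acting only on the $V$-factor and $T'$ acting by translation on itself.

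Next I would consider the quotient $W/T_x$. The computation above gives $W/T_x\cong T'\times(V/T_x)$, and the residual $T/T_x$-action is free with quotient $W/T=V/T_x=U_x$. Hence $W/T_x\to U_x$ is a principal $T/T_x$-bundle over a contractible base, and is therefore trivial. Restricting to $U_x\setminus Z$, where $Z\cap U_x$ corresponds to the non-free locus of the $T_x$-action on $V$, we obtain a trivial principal $T/T_x$-bundle
\[
p^{-1}(U_x\setminus Z)/T_x\to U_x\setminus Z.
\]

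Finally I would identify this $T/T_x$-bundle with the one obtained by change of structure group from $p^{-1}(U_x\setminus Z)\to U_x\setminus Z$ along the quotient $T\to T/T_x$. By the functoriality of classifying maps for principal torus bundles, the Euler class of the quotient bundle is the image of the Euler class of the original $T$-bundle under the coefficient map $H^2(U_x\setminus Z;H_1(T))\to H^2(U_x\setminus Z;H_1(T/T_x))$ induced by $H_1(T)\to H_1(T/T_x)$. Since the quotient bundle extends trivially over $U_x$, its Euler class vanishes, so $e_x$ maps to zero as required. The main subtlety is the slice-theoretic step: ensuring that the splitting $T\cong T_x\times T'$ realizes $W/T_x\to U_x$ as a globally trivial principal bundle (not merely locally trivial), which is where contractibility of $U_x$ and the strict-appropriateness hypothesis are both essential.
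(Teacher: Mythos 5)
Your proposal is correct and follows essentially the same route as the paper: form the residual quotient $p^{-1}(U_x)/T_x\to U_x$, observe it is a principal $T/T_x$-bundle that is trivial over the contractible $U_x$ (hence over $U_x\setminus Z$), and note that its Euler class is the image of $e_x$ under the coefficient map $H_1(T)\to H_1(T/T_x)$. The extra slice-theorem detail you supply just makes explicit why the residual quotient is a principal bundle, which the paper asserts directly from the containment of stabilizers in $T_x$.
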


\begin{proof}
As before, let $Q=X/T$ be the orbit space, $Z\subset Q$ the set of
orbits of dimensions $\leqslant n-2$, and $p\colon X\to Q$ the
projection map. Take any point $x\in Z\subset Q$. We can choose a
small neighborhood $U_x\ni x$, $U_x\subset Q$ such that
stabilizers of any point $y\in U_x$ are contained in $T_x$ and
$U_x\cong \Ro^{n+1}$. Consider the map
\[
f\colon p^{-1}(U_x)/T_x\stackrel{T/T_x}{\longrightarrow} U_x
\]
taking the remaining quotient. Since all stabilizers of points in
$U_x$ are contained in $T_x$, the map $f$ is a principal
$T/T_x$-bundle. It is a trivial bundle since $U_x$ is
contractible, therefore the induced $T/T_x$-bundle over
$U_x\setminus Z$ is also trivial. Hence its Euler class vanishes.
However, this Euler class is the image of $e_x\in H^2(U_x\setminus
Z;H_1(T))$ under the induced map $H_1(T)\to H_1(T/T_x)$, which
proves the statement.
\end{proof}

\begin{rem}
For a point $x$ in the interior of a facet $F_i$ the stabilizer
$T_x$ is one-dimensional. In this case the compatibility condition
tells that $e_x$ is proportional to the fundamental class of
$T_x=\mu(F_i)$:
\[
e_x=k_i\mu(F_i)\in H_1(T;\Zo)\cong \Hom(S^1,T).
\]
The constants $k_i\in \Zo$ can be determined from the weights of
the tangent representation at any fixed point adjacent to $F_i$.
It will be shown in Section \ref{secOrientations} that all these
constants are actually $\pm1$ for strictly appropriate actions.
\end{rem}

\begin{con}\label{constrModelSpace}
Let us construct a topological model space given abstract
compatible characteristic data. Assume a topological
$(n+1)$-manifold $Q$ is given, and let $Z\subset Q$ be a sponge
with facets $F_1,\ldots,F_m$. Let $\mu$ be a map assigning a
1-dimensional subgroup of $T=T^{n-1}$ to any facet $F_i$ with the
following property: if a $k$-dimensional face $F$ of a sponge lies
in facets $F_{i}$ with labels $i\in I$, $|I|={n-k\choose 2}$, then
\[
\dim\prod_{i\in I}\mu(F_i)=k.
\]
The subgroup $\prod_{i\in I}\mu(F_i)$ will be denoted $T_x$ if $x$
lies in interior of $F$. If $x\in Q\setminus Z$ we set
$T_x=\{1\}\subset T$. Finally, fix a class $e\in H^2(Q\setminus Z;
H_1(T))$ compatible with $\mu$. With all this information fixed,
introduce a space $Y=Y_{(Q,Z,\mu,e)}$. As a set,
\[
Y=\bigsqcup_{x\in Q} T/T_x.
\]
The topology is introduced in two steps.

(1) The topology on a subset
\[
Y^{\free}=\bigsqcup_{x\in Q\setminus Z} T/T_x= \bigsqcup_{x\in
Q\setminus Z} T\subset Y
\]
is introduced in a way such that the natural projection
$Y^{\free}\to Q\setminus Z$ is the principal $T$-bundle classified
by $e\in H^2(Q\setminus Z;H_1(T))$.

(2) For a point $y$ in $\bigsqcup_{x\in Z} T/T_x$ we specify the
basis of topology. Let $x\in Z$ and $t_x\in T/T_x\subset Y$. To
define the base of topology near $t_x$, we fix a small open
neighborhood $U_x\subset Q$ of $x$ and for each $x'\in U_x$ take a
projection of tori $p_{x'}\colon T/T_{x'}\to T/T_x$. This is well
defined since $U_x$ is assumed small enough so that $T_x$ contains
any other stabilizer $T_{x'}$. Let $V$ be a neighborhood of $t_x$
in $T/T_x$. The subsets of the form
\[
\bigsqcup_{x'\in U_x} p_{x'}^{-1}(V)
\]
form the base of topology around $t_x$. Note that since $e$ and
$\mu$ are compatible, we have a trivial principal $T/T_x$-bundle
over $A\to U_x\setminus Z$ therefore the topology defined in (2)
is compatible with the one defined in (1) on a subset
$U_x\setminus Z$.

Finally, define the $T$-action on each fiber $T/T_x$ as given by
the projection $T\to T_x$. It can be seen that $Y$ is a compact
Hausdorff topological space carrying the continuous action of $T$.
Its orbit space is homeomorphic to $Q$.
\end{con}

The constructed space $Y=Y_{(Q,Z,\mu,e)}$ is not necessarily a
manifold.

\begin{ex}
Assume $e_x=0$ for some point $x$ lying in interior of a facet
$F_j$. Then $Y$ is not a manifold over $x$. Indeed, by
construction, a neighborhood of $x$ in $Y$ is homeomorphic to
$U_x\times T/\sim$, where $(x',t')\sim(x'',t'')$ whenever
$x'=x''\in Z$ and $t'(t'')^{-1}\in \mu(F_j)$. This subset is not a
manifold, which can be shown by computing its local homology
groups for points lying over $Z$.
\end{ex}

\begin{prop}\label{propHomeoToModel}
Let $X=X^{2n}$ be a manifold with strictly appropriate action of
$T=T^{n-1}$ in general position. Let $(Q,Z,\mu,e)$ be its
characteristic data. Let $Y$ be the model space constructed from
the data $(Q,Z,\mu,e)$. Then there is a $T$-equivariant
homeomorphism $h\colon Y\to X$ which induces the identity
homeomorphism on the orbit space $Q$:
\[
\xymatrix{ Y\ar[r] \ar[d]^{p_Y}& X \ar[d]^{p_X} \\
Q \ar@{=}[r]& Q}
\]
\end{prop}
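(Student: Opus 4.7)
The plan is to construct $h$ by using the slice theorem to produce local $T$-equivariant homeomorphisms from $Y$ to $X$ and to glue them into a global map using the classification of principal $T$-bundles on the free locus.

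First I would handle the free part: over $Q\setminus Z$, both $p_X\colon X^{\free}\to Q\setminus Z$ and $p_Y\colon Y^{\free}\to Q\setminus Z$ are principal $T$-bundles with classifying class $e\in H^2(Q\setminus Z; H_1(T))$---the former by the definition of $e$, the latter by part~(1) of Construction~\ref{constrModelSpace}. Hence there exists a $T$-equivariant bundle isomorphism $h^{\free}\colon Y^{\free}\to X^{\free}$ covering $\id_{Q\setminus Z}$, unique up to multiplication by a gauge map $Q\setminus Z\to T$.

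Next I would produce local models near each orbit in $p_X^{-1}(Z)$. For any orbit $T\tilde{x}\subset X$ with stabilizer $T_{\tilde{x}}=T_x$, the slice theorem gives a $T$-invariant tubular neighborhood equivariantly diffeomorphic to $T\times_{T_x}V_{\tilde{x}}$, where $V_{\tilde{x}}$ is a $T_x$-invariant open ball in the normal representation. Its $T$-orbit space is an open neighborhood $U_x$ of $x$ in $Q$, and it is a manifold by Theorem~\ref{thmManifold}. By part~(2) of Construction~\ref{constrModelSpace}, the corresponding neighborhood $p_Y^{-1}(U_x)\subset Y$ is itself of the form $T\times_{T_x}V'$, where $V'$ is recovered from $\mu|_{U_x}$ and from $e|_{U_x\setminus Z}$. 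Since the characteristic data $(\mu,e)$ is extracted from $X$ to begin with, the $T_x$-representations $V_{\tilde{x}}$ and $V'$ are isomorphic, so one obtains a local $T$-equivariant homeomorphism $h_x\colon p_Y^{-1}(U_x)\to T\tilde{x}\cdot V_{\tilde{x}}$ covering the identity on $U_x$.

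The final step, and the main obstacle, is to adjust these local pieces so that they agree with each other and with $h^{\free}$ on overlaps. On each overlap two candidate maps differ by multiplication with a continuous $T$-valued gauge function on the free part, and these gauge cocycles classify the difference of the two principal $T$-bundles; this class vanishes because both bundles are classified by the same $e$, so trivialization is possible over $Q\setminus Z$. The hard part will be ensuring that the required gauge adjustments extend continuously across $Z$. Near a point $x\in Z$, continuity of the extension is equivalent to the composite of the gauge function with $T\to T/T_x$ admitting a continuous extension across $x$, and this is precisely the content of the compatibility condition on $(\mu,e)$: it forces the induced $T/T_x$-bundle over $U_x\setminus Z$ to be trivial, so the projected gauge function is nullhomotopic. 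After these adjustments are made the glued map $h\colon Y\to X$ is a $T$-equivariant bijection covering $\id_Q$ that is a local homeomorphism by construction, hence a homeomorphism.
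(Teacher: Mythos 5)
Your overall strategy (local equivariant models near $Z$ plus a gauge-theoretic gluing) is genuinely different from the paper's, and its key step has a real gap. The paper never builds local slice models of $Y$ and never glues: it fixes the bundle isomorphism $h$ over $Q\setminus Z$ once and for all, and then shows that this particular $h$ extends over each $p_Y^{-1}(U_x)$ by passing to $T_x$-quotients --- both $p_Y^{-1}(U_x)/T_x$ and $p_X^{-1}(U_x)/T_x$ are trivial $T/T_x$-bundles over $U_x$ by the compatibility condition, $U_x\setminus Z$ is dense in $U_x$, so the induced map extends uniquely, and one lifts the extension back because the projection to the $T_x$-quotient is a bijection over $x$. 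Uniqueness of these extensions makes them automatically agree on overlaps, so no coherence problem ever arises.

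In your version the gluing step does not go through as written. First, the assertion that $p_Y^{-1}(U_x)$ has the form $T\times_{T_x}V'$ with $V'$ a $T_x$-representation is not available: Construction~\ref{constrModelSpace} defines $Y$ as a disjoint union of tori with an ad hoc topology, not as a twisted product, and there is no smooth structure on $Y$ from which a slice theorem could produce $V'$; ``recovered from $\mu|_{U_x}$ and $e|_{U_x\setminus Z}$'' is exactly the content that needs proof. Second, and more seriously, the gauge function $g=h^{\free}\circ h_x^{-1}$ on $U_x\setminus Z$ is forced on you (you have no freedom to replace it), and for the modified map to be continuous you need $g\bmod T_{x'}$ to extend continuously at \emph{every} point $x'\in Z\cap U_x$, where $T_{x'}$ may be a proper subgroup of $T_x$ --- a strictly stronger requirement than the one you check at the single point $x$. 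Moreover, nullhomotopy of the projected gauge function $U_x\setminus Z\to T/T_x$ (which is all that triviality of the $T/T_x$-bundle gives you) does not imply that it extends continuously across $Z$: a nullhomotopic circle-valued map on a punctured ball can fail to have a limit at the puncture. The density-plus-uniqueness mechanism of the paper's proof is precisely what replaces this missing argument, and without it (or an equivalent) your construction of $h$ over $Z$ is incomplete.
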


\begin{proof}
The equivariant homeomorphism over $Q\setminus Z$ follows
immediately, since both $p_X^{-1}(Q\setminus Z)$ and
$p_Y^{-1}(Q\setminus Z)$ are the principal $T$-bundles classified
by $e$. For a point $x\in Z\subset Q$, the equivariant
homeomorphism $h\colon p_Y^{-1}(U_x\setminus Z)\to
p_X^{-1}(U_x\setminus Z)$ is extended uniquely to the equivariant
homeomorphism $h\colon p_Y^{-1}(U_x)\to p_X^{-1}(U_x)$. Indeed,
there is a unique equivariant homeomorphism $\tilde{h}\colon
p_Y^{-1}(U_x)/T_x\to p_X^{-1}(U_x)/T_x$ which extends the
homeomorphism $h/T_x\colon p_Y^{-1}(U_x\setminus Z)/T_x\to
p_X^{-1}(U_x\setminus Z)/T_x$, since both spaces are trivial
$(T/T_x)$-bundles over $U_x$ (according to compatibility
condition) and $U_x\setminus Z$ is dense in $U_x$. For a point
$t_x\in T/T_x\subset p_Y^{-1}(U_x)$ over $x$ there is a unique
point $\alpha\in p_X^{-1}(U_x)$ such that
$\tilde{h}([t_x])=[\alpha]$, since the projection map
$p_X^{-1}(U_x)\to p_X^{-1}(U_x)/T_x$ is a bijection over $x$.
Hence we can extend $h$ by putting $h(t_x)=\alpha$.

This procedure defines an equivariant continuous bijection between
compact spaces $Y$ and $X$. Since $X$ is compact and $Y$ is
Hausdorff it is an equivariant homeomorphism.
\end{proof}

\section{Orientation issues and details}\label{secOrientations}

Consider a representation of $T=T^{n-1}$ on $\Co^n$ in general
position. The weights $\alpha_1,\ldots,\alpha_n\in \Hom(T,S^1)$
are defined up to sign.

\begin{defin}
An \emph{omniorientation} is a choice of the orientation of $T$
(hence the orientation of the lattice $N=\Hom(T,S^1)$) and the
choice of signs of all vectors $\alpha_i$. 
\end{defin}

\begin{con}
Assume there is a fixed basis in the lattice $N$, so that
$\alpha_j$ is written in coordinates
$\alpha_j=(\alpha_{j,1},\ldots,\alpha_{j,n-1})$. For each
$i=1,\ldots,n$ consider the determinant of the matrix formed by
$\alpha_j$ with $j\neq i$:
\[
\ct_i=(-1)^i\alpha_1\wedge\cdots\wedge
\widehat{\alpha_i}\wedge\cdots\wedge\alpha_n\in
\Lambda^{n-1}N\cong\Zo
\]
Since $\alpha_i$ are in general position we have $\ct_i\neq 0$ for
all $i=1,\ldots,n$. Cramer's rule implies
\[
\ct_1\alpha_1+\cdots+\ct_n\alpha_n=0.
\]
Let $c_{\gcd}=\gcd(\ct_1,\ldots,\ct_n)$ and $c_i=\ct_i/c_{\gcd}$.
Let $G=T^n$ act on $\Co^n$ in a standard way
\[
(t_1,\ldots,t_n)\cdot (z_1,\ldots,z_n)=(t_1z_1,\ldots,t_nz_n)
\]
and let $T$ be a subtorus
\begin{equation}\label{eqTprime}
T'=\{t_1^{c_1}\cdots t_n^{c_n}=1\}\subset G
\end{equation}
The proof of Lemma \ref{lemLocalQuotient} implies that the orbit
space of the representation of $T$ on $\Co^n$ coincides with the
orbit space of the induced action of $T'$ on $\Co^n$, therefore we
may not distinguish these two cases.
\end{con}

\begin{lem}\label{lemPMone}
The representation action of $T=T'$ on $\Co^n$ in general position
is strictly appropriate if and only if $c_i=\pm1$, that is all
parameters $\ct_i$ coincide up to sign.
\end{lem}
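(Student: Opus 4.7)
The plan is to reduce strict appropriateness to a connectedness condition on certain kernels, and then extract the numerical criterion on the $c_i$.

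First I would describe the stabilizers of the $T'$-action on $\Co^n$. For the standard action of $G=T^n$, the stabilizer of a point $z=(z_1,\ldots,z_n)$ is the coordinate subtorus $G_I=\{(t_1,\ldots,t_n)\in G\mid t_j=1 \text{ for } j\notin I\}$, where $I=\{i\mid z_i=0\}\subseteq[n]$. Since $T'\subseteq G$, the $T'$-stabilizer of $z$ is $T'_z=T'\cap G_I$. Using the defining equation \eqref{eqTprime}, this intersection is exactly the kernel of the homomorphism
\[
\phi_I\colon G_I\cong T^{|I|}\to S^1,\qquad (t_i)_{i\in I}\mapsto \prod_{i\in I}t_i^{c_i},
\]
because the coordinates $t_j$ with $j\notin I$ are forced to be $1$ and drop out of the relation.

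Next I would invoke the standard fact that the kernel of a homomorphism $T^k\to S^1$ with exponent vector $(a_1,\ldots,a_k)\neq 0$ is connected if and only if $\gcd(a_1,\ldots,a_k)=1$; if the exponent vector is zero the kernel is all of $T^k$ and hence connected. Strict appropriateness requires each $T'_z$ to be connected, i.e.\ each such kernel to be connected. Since the action is in general position, $c_i\neq 0$ for every $i$, so the relevant exponent vectors $(c_i)_{i\in I}$ are nonzero for every nonempty $I\subseteq[n]$. Thus strict appropriateness is equivalent to $\gcd(c_i\mid i\in I)=1$ for every nonempty $I\subseteq[n]$.

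Finally I would check both directions of the equivalence. Necessity: taking $I=\{i\}$ for each $i\in[n]$, the condition forces $|c_i|=1$, i.e.\ $c_i=\pm1$, which by the formula $\ct_i=c_{\gcd}\cdot c_i$ means all the $\ct_i$ coincide up to sign. Sufficiency: if $c_i=\pm1$ for every $i$, then $\gcd(c_i\mid i\in I)=1$ for every nonempty $I$, so every stabilizer $T'_z$ is connected. There is no real obstacle here; the only delicate point is the kernel-connectedness criterion for homomorphisms of tori, which is classical, so the proof is essentially a direct translation of ``connected stabilizer for every coordinate subset $I$'' into a gcd statement and then specializing to singletons.
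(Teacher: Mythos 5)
Your proof is correct and follows essentially the same route as the paper's: identify each stabilizer as $T'\cap G_I$ for $I=\{i\mid z_i=0\}$, observe its component group is cyclic of order $\gcd(c_i\mid i\in I)$, and specialize to singletons $I=\{j\}$ for necessity. Your write-up is in fact slightly cleaner, since it makes explicit the kernel-connectedness criterion and correctly pins down which points (those with exactly one vanishing coordinate) realize the stabilizer $T'\cap G_j\cong\Zo_{|c_j|}$.
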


\begin{proof}
The point $(0,\ldots,0,1,0,\ldots,0)$ with unit at $j$-th position
has the stabilizer $T'\cap G_j$, where $T'$ is given by
\eqref{eqTprime} and $G_j$ is the $j$-th coordinate circle of
$G\cong T^n$. This stabilizer is isomorphic to the cyclic group
$\Zo_{c_j}$. If the action is strictly appropriate, then there are
no finite components in stabilizer subgroups, so far $c_i$ is
necessarily $\pm1$.

The converse statement is similar. The stabilizers of $T'$-action
on $\Co^n$ have the form $T'\cap G_I$ for all possible coordinate
subtori $G_I\in G$, $I\subseteq [n]$. This group has finite
component of order $\gcd(c_i\mid i\in I)$. Hence, if all $c_i$ are
$\pm1$, these finite components vanish.
\end{proof}

Recall that $C\subset \Ro^{n-1}\subset\Ro^{n+1}$ denotes the
$(n-2)$-skeleton of the fan of type $A_{n-1}$. This space is the
sponge of an appropriate representation action of $T$ on $\Co^n$.

In the following we only consider strictly appropriate actions.
The facets $\{F_{i,j}\mid 1\leqslant i<j\leqslant n\}$ of $Z$ are
labeled in a way that $F_{i,j}$ is ``spanned'' by all weights
except $\alpha_i$ and $\alpha_j$. Let us fix an orientation on
1-dimensional stabilizers of the action (this corresponds to the
choice of the signs of the characteristic values $\mu(F_{i,j})\in
\Hom(S^1,T)$). These orientations determine the orientation of the
orbit $Tx\cong T/\mu(F_{i,j})$ for $x\in F_{i,j}^\circ$. The
preimage of $F_{i,j}^\circ$ under the projection map has the form
$\{(z_1,\ldots,z_n)\in\Co^n\mid z_i=z_j=0,z_k\neq 0$ for $k\neq
i,j\}$, this space has a canonical orientation determined by the
complex structure on $\Co^n$. Therefore the orientations of the
stabilizer circles determine the orientations of facets $F_{i,j}$.
Finally, since the orientation on $\Co^n/T\cong \Ro^{n+1}$ is
fixed, the orientation of $F_{i,j}$ determines the orientation of
a small 2-sphere $S_{i,j}^2$ around $F_{i,j}$. Let us describe the
Euler class of the free part of action.

\begin{prop}\label{propPMone}
The Euler class $e\in H^2(Q\setminus Z;H_1(T))$ of a strictly
appropriate representation action of $T$ on $\Co^n$ is given by
the condition
\[
\langle e, [S_{i,j}^2]\rangle = \frac{c_i}{c_j}\mu(F_{i,j})\in
H_1(T)\cong \Hom(S^1,T),
\]
for a small 2-sphere around facet $F_{i,j}$, $1\leqslant
i<j\leqslant n$.
\end{prop}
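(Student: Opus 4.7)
The plan is to localize near an interior point of the facet $F_{i,j}$ and reduce the computation to a classical weighted Hopf Euler number via the slice theorem. I would take $x_0 \in \Co^n$ with $z_i(x_0) = z_j(x_0) = 0$ and $z_k(x_0) \neq 0$ for $k \neq i,j$, so that $[x_0] \in F_{i,j}^\circ$. Working in the model $T = T'$ of the preceding construction, the stabilizer is $H_{i,j} := T \cap G_{\{i,j\}} = \{(t_i,t_j) \in T^2 : t_i^{c_i} t_j^{c_j} = 1\}$, a circle admitting the parameterization $s \mapsto (s^{c_j}, s^{-c_i})$. The normal representation to the orbit $T \cdot x_0$ is $\Co^2 = \Co_{z_i} \oplus \Co_{z_j}$, on which $H_{i,j}$ acts with weights $(c_j, -c_i)$. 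By the slice theorem a $T$-invariant neighborhood of $T\cdot x_0$ is equivariantly diffeomorphic to $T \times_{H_{i,j}} \Co^2$; passing to orbits and applying Lemma~\ref{lemLocalQuotient} to this $1$-torus action on $\Co^2$ in general position yields $\Co^2/H_{i,j} \cong \Ro^3$, which realizes the 3-disk transverse to $F_{i,j}$ in $Q$. In particular $S^2_{i,j}$ is identified with $S^3/H_{i,j}$, where $S^3 \subset \Co^2$ is the unit sphere.

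\medskip

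Next, the principal $T$-bundle $p^{-1}(S^2_{i,j}) \to S^2_{i,j}$ is identified with the associated bundle $T \times_{H_{i,j}} S^3 \to S^3/H_{i,j}$ of the principal $H_{i,j}$-bundle $S^3 \to S^3/H_{i,j}$ along the inclusion $\iota\colon H_{i,j} \hookrightarrow T$. Its classifying map factors as $S^2_{i,j} \to BH_{i,j} \to BT$, giving
\[
\langle e, [S^2_{i,j}]\rangle = \iota_*\bigl(\langle e_H, [S^2_{i,j}]\rangle\bigr) \in H_1(T),
\]
where $\iota_*$ sends the positive generator of $H_1(H_{i,j})$ to $\mu(F_{i,j})$. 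For a free $S^1$-action on $\Co^2$ with weights $(a,b)$, $\gcd(a,b)=1$, the Euler number of $S^3 \to S^2$ equals $\pm ab$, with sign determined by the chosen orientation of the base. Substituting $(a,b) = (c_j, -c_i)$ and using $c_i, c_j \in \{\pm 1\}$ from Lemma~\ref{lemPMone} yields $\pm c_i c_j = \pm c_i/c_j$. Reconciling with the orientation of $S^2_{i,j}$ — induced by those of $Q$ and of $F_{i,j}$, the latter coming from the complex orientation of $\{z_i = z_j = 0\}^\circ \subset \Co^n$ together with the oriented stabilizer circle $\mu(F_{i,j})$ — pins down the sign as required.

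\medskip

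The main obstacle is the orientation bookkeeping: independent choices of orientation for $T$, for the weights $\alpha_k$, for $Q$, and for each stabilizer circle $\mu(F_{i,j})$ interact through the slice theorem, the complex structure on $\Co^n$, and the sign conventions in the definition of $\tilde{c}_i$. Once these are aligned, the substantive content of the proposition is the classical weighted Hopf Euler number formula above.
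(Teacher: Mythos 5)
Your proposal is correct and follows essentially the same route as the paper: both localize at an interior point of $F_{i,j}$, observe that the residual $T/T_{F_{i,j}}$-bundle over $S^2_{i,j}$ is trivial so that the Euler class is the pushforward of the stabilizer circle's Euler class under $H_1(T_{F_{i,j}})\to H_1(T)$, and identify the stabilizer-circle action on $S^3\subset\Co_{z_i}\oplus\Co_{z_j}$ as a Hopf or anti-Hopf bundle with Euler number $c_i/c_j=\pm 1$. The only cosmetic difference is that the paper writes down the preimage $M$ of $S^2_{i,j}$ inside $\Co^n$ explicitly instead of invoking the slice theorem and the associated-bundle construction, and it likewise settles the sign by the same orientation conventions you describe.
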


The constants $c_i$ were defined earlier in this section. Lemma
\ref{lemPMone} shows that for strictly appropriate actions
$c_i=\pm1$. Note that $\frac{c_i}{c_j}=\frac{\ct_i}{\ct_j}$, and
parameters $\ct_i,\ct_j$ can be computed from the weight vectors.

\begin{proof}
Assume $i=1$, $j=2$ for simplicity. The preimage of a sphere
$S_{1,2}^2$ in the space $\Co^n$ has the form
\[
M=\{(z_1,\ldots,z_n)\in\Co^n\mid
|z_1|^2+|z_2|^2=\varepsilon,|z_k|=\varepsilon, k>2\}\mbox{ for
small }\varepsilon>0.
\]
The subtorus $T=\{t_1^{c_1}\cdots t_n^{c_n}=1\}\subset G$ acts
freely on $M$. The stabilizer $T_x=\mu(F_{1,2})$ for $x\in
F_{1,2}^{\circ}$ has the form
\[
T_x=\{t_1^{c_1}t_2^{c_2}=1,t_k=1,k>2\}.
\]
The induced action of $T/T_x$ on $M/T_x$ is a trivial principal
bundle, therefore Euler class of $T$-action on $M$ coincides with
the image of the Euler class of $T_x$-action on $M$ under the
inclusion map $i_x\colon T_x\to T$. The $T_x$-action on $M$ is the
Hopf bundle if $c_1,c_2$ have the same sign, and ``anti-Hopf''
bundle if $c_1,c_2$ have different signs. Its Euler class is
$\mu(F_{1,2})\in H^2(S^2_{1,2};H_1(T))$ in the first case and
$-\mu(F_{1,2})$ in the second case.
\end{proof}

\begin{figure}[h]
\begin{center}
\includegraphics[scale=0.3]{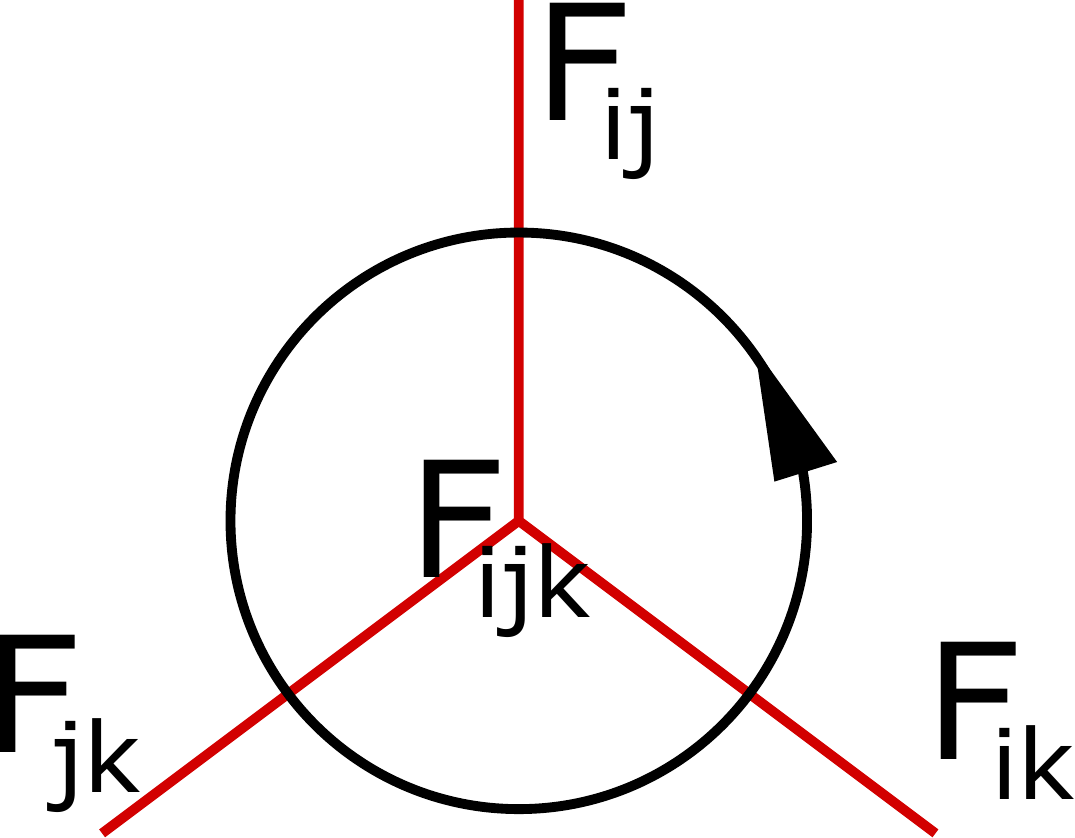}
\end{center}
\caption{Orienting three facets with a common
face}\label{figFacets}
\end{figure}

\begin{rem}
Note that there exist relations on the cycles $[S_{i,j}]\in
H_2(Q\setminus C;\Zo)\cong \Zo^{n-1}$. Every triple of indices
$i,j,k$ determines the $(n-3)$-face $F_{i,j,k}\in C$ which lies in
facets $F_{i,j},F_{j,k},F_{i,k}$. If we choose a small circle
around $F_{i,j,k}\subset \Ro^{n-1}$ and orient the facets
$F_{i,j},F_{j,k},F_{i,k}$ consistently (see schematic
Fig.\ref{figFacets}), we get a relation in $H_2(Q\setminus
C;\Zo)$:
\[
[S_{i,j}]+[S_{j,k}]+[S_{i,k}]=0.
\]
It implies the cocycle relation for stabilizers:
\begin{equation}\label{eqCocycleReln}
\frac{c_i}{c_j}\mu(F_{i,j})+\frac{c_j}{c_k}\mu(F_{j,k})+
\frac{c_i}{c_k}\mu(F_{i,k})=0.
\end{equation}
This relation is not surprising. Indeed, the product of the circle
subgroups $\mu(F_{i,j}),\mu(F_{j,k}),\mu(F_{i,k})$ inside the
torus $T$ have dimension $2$, therefore there should be exactly
one linear relation on their fundamental classes.
\end{rem}

Proposition \ref{propPMone} implies that for strictly appropriate
torus actions we have $e_x=\pm[\mu(F)]$ for $x\in F^\circ$, since
this holds in the local chart around fixed point.

\section{Reductions of locally standard
actions}\label{secLocStandActions}

A smooth manifold $X=X^{2n}$ with the action of $G=T^n$ is called
\emph{locally standard} if the action is locally modeled by the
standard representation of $G=T^n$ on $\Co^n$. Since
$\Co^n/T^n\cong \Rg^n$, the orbit space $P=X/G$ gets a natural
structure of a manifold with corners. Manifolds with locally
standard actions are classified up to equivariant homeomorphism
(see \cite{Yo}) by the following \emph{characteristic data}

\begin{enumerate}
\item The manifold with corners $P$, $\dim P=n$. There is a requirement that
every $k$-dimensional face of $Q$ is contained in precisely $n-k$
facets. Such manifolds with corners are called \emph{nice} in
\cite{MasPan} or just \emph{manifolds with faces}.

\item The characteristic function $\lambda$ which maps a facet $F$ of
$P$ into a circle subgroup of $G$: the stabilizer of any interior
point of $F$. Characteristic function satisfies the celebrated
(*)-condition: whenever facets $F_1,\ldots,F_k$ intersect in $P$,
the subgroups $\lambda(F_1),\ldots,\lambda(F_k)$ form a direct
product inside $G$. Since every circle subgroup of $G$ determines
a primitive integral vector in $\Hom(S^1,G)\cong \Zo^n$ up to
sign, it will be convenient to assume that $\lambda$ takes values
in the lattice.

\item The Euler class $e\in H^2(Q;H_1(G))\cong H^2(P^\circ;H_1(G))$,
which classifies the principal $G$-bundle $X^{\free}\to
X^{\free}/G=P^{\circ}$, where $X^{\free}$ is the free part of the
$G$-action.
\end{enumerate}

In the following we assume that every face of $P$ contains a
vertex so that the action is \emph{appropriate}.

A manifold $X$ with a locally standard action of $G$ is called a
\emph{quasitoric manifold} if $P=X/G$ is isomorphic to a simple
polytope as a manifold with corners. The free part of action is a
trivial $G$-bundle, since $P$ is contractible. So far, the Euler
class vanishes for quasitoric manifolds.

A fixed point $v$ of a locally standard action of $G$ on $X$
corresponds to a vertex $v$ of $P$ (we denote them by the same
letter). We have $v=F_1\cap\cdots\cap F_n$ for some facets
$F_i\subset Q$. Then the weights $\alpha_1,\ldots,\alpha_n\in
\Hom(G,S^1)=N$ of the tangent representation at $v$ is the dual
basis to $\lambda(F_1),\ldots,\lambda(F_n)\in \Hom(S^1,G)=N^*$.

Let $\{\alpha_{v,i}\}$ be a collection of all weights at all fixed
points. We can choose a generic homomorphism of the lattices
\[
\phi\colon \Hom(G,S^1)\cong\Zo^n\to \Zo^{n-1}
\]
such that the images
$\phi(\alpha_{v,1}),\ldots,\phi(\alpha_{v,n})\in \Zo^{n-1}$ are in
general position for any fixed point $v$. The homomorphism $\phi$
is determined by some homomorphism of tori $\phi^*\colon
T^{n-1}\to G$. Therefore the action of the subtorus
$T=\phi^*(T^{n-1})\subset G$ on $X$ is in general position.

\begin{thm}\label{thmQToricReduc}
Let $X=X^{2n}$ be a quasitoric manifold with the action of $G\cong
T^n$. Let $T\subset G$ be a subtorus of dimension $n-1$ such that
the induced action of $T$ on $X$ is an action in general position.
Then $X/T\cong S^{n+1}$.
\end{thm}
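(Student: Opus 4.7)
The plan is to construct an explicit homeomorphism $X/T\to S^{n+1}$ by factoring the orbit map through $X/G=P$ and analyzing the induced surjection $\pi\colon X/T\to P$. For $[x]\in P$ with $G$-stabilizer $T_x$, the fiber $\pi^{-1}([x])$ is the homogeneous space $G/(T\cdot T_x)$. Over the interior $P^\circ$ the $G$-action is free, so the fiber is $G/T\cong S^1$. Over a face $F$ of positive codimension, $T_x$ contains $\lambda(F_i)$ for some facet $F_i\supseteq F$; since the action is in general position, no $\lambda(F_i)$ lies in $T$ (equivalently, the coefficients $c_i$ of the linear relation on the $T$-weights at a vertex of $F_i$ are all nonzero, cf.\ Section~\ref{secOrientations}), so $T\cdot \lambda(F_i)=G$ and the fiber degenerates to a single point.

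Next I would invoke the Davis--Januszkiewicz description of quasitoric manifolds \cite{DJ}: $X\cong (P\times G)/\!\sim$ with $(p,g)\sim(p,g')$ iff $g^{-1}g'\in T_p$. This model is valid because $P$ is contractible, hence the Euler class of the free part vanishes. Passing to the $T$-quotient yields
\[
X/T\cong (P\times G/T)/\!\sim,
\]
and the fiber computation above shows that the induced equivalence relation collapses $\{p\}\times G/T$ to a single point precisely when $p\in\dd P$. Since the simple polytope $P$ is homeomorphic to $D^n$ and $G/T\cong S^1$, we obtain
\[
X/T\cong (D^n\times S^1)/\!\sim,
\]
where $\{x\}\times S^1$ is collapsed over each $x\in\dd D^n$.

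Finally I would identify this space with $S^{n+1}$ using the standard model $S^{n+1}=\{(u,z)\in\Ro^n\times\Co:|u|^2+|z|^2=1\}$ with the circle acting on the $z$-coordinate: the map $(u,z)\mapsto u$ has fiber $S^1$ over $|u|<1$ and a single point over $|u|=1$, exhibiting $S^{n+1}$ as a quotient of $D^n\times S^1$ by exactly the same relation. An explicit homeomorphism is given by $(u,t)\mapsto (u,\sqrt{1-|u|^2}\,t)$, which descends to a continuous bijection from the compact space $(D^n\times S^1)/\!\sim$ onto the Hausdorff space $S^{n+1}$.

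The main obstacle is the middle step: justifying that $X/T$ globally has this ``collapsed product'' form rather than merely locally near fixed points (as in Lemma~\ref{lemLocalQuotient}). This rests on combining the fiber computation (general position) with the global triviality of the principal $G$-bundle $X^{\free}\to P^\circ$, which holds because $P$ is contractible; once both ingredients are in hand, identification with $S^{n+1}$ is formal.
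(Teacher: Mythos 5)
Your proposal is correct and follows essentially the same route as the paper: factor the orbit map through $P=X/G$, observe that the fibers are $S^1$ over $P^\circ$ and single points over $\dd P$ (by general position), use contractibility of $P$ to trivialize the free part globally, and identify $(D^n\times S^1)/\!\sim$ with $S^{n+1}$. The only cosmetic difference is that you invoke the Davis--Januszkiewicz model $(P\times G)/\!\sim$ where the paper takes a section of $Q\to P$ over $P^\circ$; these are equivalent uses of the vanishing Euler class.
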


\begin{proof}
Denote the orbit space $X/T$ by $Q$ and the orbit space $X/G$ by
$P$. By the definition of a quasitoric manifold, $P$ is a simple
polytope, $\dim P=n$. We have a map $g\colon Q\to P$, which sends
a $T$-orbit to the $G$-orbit in which it lies. For any point $x$
in the interior of $P$ we have $g^{-1}(x)\cong S^1$. Since the
action is in general position, the preimage of a point $x\in\dd P$
is a single point (this fact was actually proved in Lemma
\ref{lemLocalQuotient} for a local chart). Since $P$ is
contractible, the map $g\colon Q\to P$ admits a section over
$P^\circ$. Therefore we have
\[
Q\cong P\times S^1/\sim
\]
where $\sim$ collapses circles over $\dd P$. Since $P$ is
homeomorphic to the $n$-disc $D^n$, we have
\[
Q\cong D^n\times S^1/\sim\cong \dd(D^n\times D^2)\cong S^{n+1},
\]
which proves the statement.
\end{proof}


We further investigate the characteristic data of the induced
action of $T\cong T^{n-1}$ on a quasitoric manifold. The arguments
in the proof of Theorem \ref{thmQToricReduc} imply the following
statement.

\begin{prop}
The sponge of the $T$-action on a quasitoric manifold $X$ has the
form
\[
Z\subset S^{n-1}\subset \Sigma^2S^{n-1}\cong Q,
\]
where $S^{n-1}$ is identified with the boundary of the polytope
$P$ and $Z$ is its $(n-2)$-skeleton. The facets of $Z$ are exactly
the faces of $P$ of codimension $2$.
\end{prop}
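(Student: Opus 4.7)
The plan is to read off the sponge directly from the explicit description of $Q$ produced in the proof of Theorem~\ref{thmQToricReduc}. There we saw $Q\cong P\times S^1/\sim$ with $\sim$ collapsing $\{x\}\times S^1$ to a point for every $x\in\dd P$. Since $P\cong D^n$ is the cone over $\dd P$, this quotient is naturally identified with the join $\dd P \ast S^1\cong S^{n-1}\ast S^1\cong \Sigma^2 S^{n-1}$, and the image of $\dd P\subset P$ inside $Q$ gives the embedded copy $S^{n-1}\subset \Sigma^2 S^{n-1}\cong Q$ required by the statement.

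Next I would identify the sponge $Z$ as the image in $Q$ of the union of codimension-$\geqslant 2$ faces of $P$. For the locally standard $G=T^n$-action, a point $\tilde x\in X$ projecting to the interior of a codimension-$k$ face $F\subset P$ has $G$-stabilizer $G_F$ of dimension $k$, and hence $T$-stabilizer $T_{\tilde x}=T\cap G_F$. Unpacking the general position assumption at each vertex — the relation $c_1\phi(\alpha_1)+\cdots+c_n\phi(\alpha_n)=0$ has all $c_i\neq 0$ — shows that $\lambda(F_j)\notin\ttt$ for every facet $F_j$ of $P$, so $T\cdot G_F=G$ for every nonempty face $F\subset\dd P$, and a dimension count yields $\dim T_{\tilde x}=k-1$. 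Hence $T_{\tilde x}$ is positive-dimensional precisely when $k\geqslant 2$, so $Z$ is the image of the $(n-2)$-skeleton of $\dd P$, taken with its CW structure inherited from the face lattice of $P$, and under the join embedding of the previous paragraph this image lies inside $S^{n-1}\subset Q$.

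Finally, the facets of $Z$ — its $(n-2)$-dimensional strata — correspond by the preceding analysis to $k=2$, i.e.\ to codimension-$2$ faces of $P$, and the bijection respects dimensions. The combinatorial compatibility (an $i$-dimensional face of $Z$ sits in $\binom{n-i}{2}$ facets) translates into the fact that a codimension-$k$ face of a simple polytope meets exactly $\binom{k}{2}$ of its codimension-$2$ faces. The local sponge model $C\subset\Ro^{n-1}\subset\Ro^{n+1}$ is provided by Lemma~\ref{lemLocalQuotient} at each fixed point, and the slice theorem extends this to the remaining points of $Z$. The main obstacle is the first step — producing the specific homeomorphism $Q\cong \Sigma^2 S^{n-1}$ in which $\dd P$ embeds as the distinguished $S^{n-1}$; once this identification is fixed, the rest of the argument reduces to bookkeeping with the face lattice of $P$ together with the equality $\dim T_{\tilde x}=\dim G_F-1$.
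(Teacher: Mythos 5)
Your argument is correct and is essentially the paper's own: the paper proves this proposition by the single remark that ``the arguments in the proof of Theorem~\ref{thmQToricReduc} imply the statement,'' and your proposal is precisely an unpacking of those arguments --- identifying $Q\cong P\times S^1/\sim$ with the join $\dd P\ast S^1\cong\Sigma^2 S^{n-1}$ and locating the non-free orbits over the codimension-$\geqslant 2$ faces via the count $\dim(T\cap G_F)=\dim G_F-1$. The only nitpick is notational: you should write $\lambda(F_j)\not\subset T$ (equivalently $\langle\alpha_T,\lambda(F_j)\rangle\neq 0$) rather than $\lambda(F_j)\notin\ttt$.
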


Note that characteristic function $\lambda$ of the $G$-action
determines the characteristic function $\mu$ of the $T$-action.
Let $F$ be a codimension-2 face of $P$ (hence a facet of $Z$).
Then $F=F_1\cap F_2$, where $F_1,F_2$ are the facets of $P$. We
have
\[
\mu(F)=\lambda(F_1)\times\lambda(F_2)\cap T
\]
Here $\lambda(F_1)\times\lambda(F_2)$ is a 2-torus in $G$, and
since $T$ is a codimension-1 subtorus of $G$ in general position,
the intersection $\lambda(F_1)\times\lambda(F_2)\cap T$ is a
1-dimensional subgroup, which is the stabilizer of the $T$-action
on interior of $F$. If we want this subgroup to be a circle
(recall that the definition of strictly appropriate action
requires that stabilizers don't have finite components), then the
subgroup $T\subset G$ is subject to some additional restrictions.
Namely, the subgroup $T\subset G$ determines the character
$\alpha_T\in \Hom(G,S^1)$, $\alpha_T\colon G\to G/T$. The next
lemma easily follows from Lemma \ref{lemPMone}:

\begin{lem}
The induced action of $T$ on a locally standard $G$-manifold $X$
is strictly appropriate if and only if
$\langle\alpha,\lambda(F_i)\rangle=\pm1$ for all facets $F_i$.
\end{lem}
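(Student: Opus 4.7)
The plan is to reduce the global statement to a local statement at each fixed point, where Lemma \ref{lemPMone} applies directly. Fix a vertex $v$ of $P$ and write $v = F_{i_1}\cap\cdots\cap F_{i_n}$. By the description of the tangent representation at a fixed point, the $G$-weights $\alpha_{v,1},\ldots,\alpha_{v,n}\in\Hom(G,S^1)$ form the dual basis to $\lambda(F_{i_1}),\ldots,\lambda(F_{i_n})\in\Hom(S^1,G)$. The induced tangent $T$-representation has weights $\phi(\alpha_{v,j})$, where $\phi\colon\Hom(G,S^1)\to\Hom(T,S^1)$ is the restriction map dual to $T\hookrightarrow G$.

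The key observation is that the kernel of $\phi$ is the cyclic subgroup generated by the primitive character $\alpha=\alpha_T$ cutting out $T$ inside $G$. Expanding $\alpha$ in the dual basis gives
\[
\alpha = \sum_{j=1}^{n}\langle\alpha,\lambda(F_{i_j})\rangle\,\alpha_{v,j},
\]
and primitivity of $\alpha$ is equivalent to $\gcd_j\langle\alpha,\lambda(F_{i_j})\rangle=1$. Hence the primitive linear relation on the $T$-weights $\phi(\alpha_{v,j})$ is exactly $\sum_j c_j\,\phi(\alpha_{v,j})=0$ with $c_j=\langle\alpha,\lambda(F_{i_j})\rangle$, and the general position hypothesis for $T$ forces all these to be nonzero. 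Lemma \ref{lemPMone} then says that the local $T$-representation at $v$ is strictly appropriate if and only if $\langle\alpha,\lambda(F_{i_j})\rangle=\pm1$ for each of the $n$ facets through $v$.

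To pass from local to global, I would argue that a $G$-locally standard action has a slice-theorem description of all $T$-stabilizers in terms of the facets through each point, so every stabilizer of the $T$-action occurs already in the slice representation at some $G$-fixed point reached by the adjoining condition. Consequently the global $T$-action is strictly appropriate if and only if its local representation at every fixed point is. Combined with the previous paragraph and the appropriateness hypothesis on $P$ (every facet contains a vertex), the condition $\langle\alpha,\lambda(F_i)\rangle=\pm1$ for all facets $F_i$ is equivalent to the condition $c_j=\pm1$ at every vertex $v$, which is equivalent to strictly appropriate.

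The main obstacle I expect is bookkeeping the primitivity: one must verify that the coefficients $\langle\alpha,\lambda(F_{i_j})\rangle$ really are the primitive relation and not just some multiple, which is exactly where the primitivity of $\alpha$ (equivalent to $T$ being a subtorus rather than a proper cover) enters. Once that is noted, the lemma is almost a direct corollary of Lemma \ref{lemPMone} applied vertex-by-vertex.
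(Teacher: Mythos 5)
Your proof is correct and follows essentially the route the paper intends: the paper gives no written proof, merely asserting that the lemma ``easily follows from Lemma~\ref{lemPMone}'', and your argument supplies exactly the missing details --- identifying the coefficients of the primitive relation on the $T$-weights at a vertex with the pairings $\langle\alpha,\lambda(F_{i_j})\rangle$ (with primitivity of $\alpha_T$ guaranteeing the $\gcd$ is $1$), and then using the hypothesis that every face of $P$ contains a vertex to reduce all stabilizers to those seen in vertex charts. No gaps.
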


\begin{ex}
Let $c\colon \{F_1,\ldots,F_m\}\to[n]$ be a proper coloring of
facets of a simple polytope $P$. This means, whenever $F_i$ and
$F_j$ are adjacent, their colors $c(F_i), c(F_j)$ are different.
Given such coloring we can construct a special characteristic
function $\lambda_c\colon\{F_1,\ldots,F_m\}\to\Zo^n$ which
associates to $F_i$ the basis vector $\lambda(F_i)=
\epsilon_{c(F_i)}\in \Zo^n$. Such characteristic functions and
corresponding quasitoric manifolds $X_{(P,\lambda_c)}$ were called
\emph{pullbacks from linear model} in \cite{DJ}. It can be seen
that the induced action of the subtorus
\[
T=\{t_1^{c_1}t_2^{c_2}\cdots t_n^{c_n}=1\}\subset G \qquad
c_i=\pm1,
\]
on $X_{(P,\lambda_c)}$ is strictly appropriate.

Note that there exist other examples of strictly appropriate
induced actions which do not come from colored characteristic
functions.
\end{ex}

%

The Euler class $e$ of the induced action of $T$ on a quasitoric
manifold $X$ determines the action.

\begin{thm}\label{thmClassSphere}
Let $X'$ and $X''$ be two manifolds with strictly appropriate
actions in general position. Let $(Q'\cong S^{n+1},Z',\mu',e')$
and $(Q''\cong S^{n+1},Z'',\mu'',e'')$ be their characteristic
data. Suppose there is a homeomorphism of pairs $(Q',Z')\cong
(Q'',Z'')$ and $e'_x=e''_x$ for any point $x$ in a sponge. Then
$X'$ and $X''$ are equivariantly homeomorphic.
\end{thm}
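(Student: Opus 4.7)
My plan is to invoke Proposition \ref{propHomeoToModel} to replace $X'$ and $X''$ by their model spaces $Y' = Y_{(Q',Z',\mu',e')}$ and $Y'' = Y_{(Q'',Z'',\mu'',e'')}$, and then to show that these two model spaces are $T$-equivariantly homeomorphic. Given the hypothesized homeomorphism of pairs $f\colon (Q',Z') \toiso (Q'',Z'')$, I would transport $(\mu'',e'')$ back along $f$ and reduce to the situation of a single pair $(Q,Z)$ carrying two sets of auxiliary data $(\mu',e')$ and $(\mu'',e'')$ with $e'_x = e''_x$ pointwise on $Z$.

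Matching the characteristic functions is the easier half. By the remark at the end of Section \ref{secOrientations} following Proposition \ref{propPMone}, for any point $x$ in the interior of a facet $F$ of $Z$ one has $e_x = \pm[\mu(F)] \in H_1(T)$. So the hypothesis $e'_x = e''_x$ forces $\mu'(F)$ and $\mu''(F)$ to be the same primitive one-dimensional subgroup of $T$, with the sign ambiguity absorbed into the arbitrary orientation convention on one-dimensional stabilizers. Thus $\mu' = \mu''$.

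The main obstacle is upgrading the pointwise equality $e'_x = e''_x$ at sponge points to the global equality $e' = e''$ in $H^2(Q \setminus Z; H_1(T))$. Here I would exploit that $Q \cong S^{n+1}$. A Mayer-Vietoris argument applied to a regular neighborhood of $Z$, or equivalently Alexander duality combined with the stratification of $Z$, should show that $H_2(Q \setminus Z; \mathbb{Z})$ is generated by the linking 2-spheres $[S^2_F]$ around the facets $F$ of $Z$, subject to relations of the cocycle type \eqref{eqCocycleReln} contributed at each $(n-3)$-face. Under this identification the pairing $\langle e, [S^2_F]\rangle$ equals the local Euler class $e_x$ at any $x \in F^\circ$, exactly as in the proof of Proposition \ref{propPMone}. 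Consequently the hypothesis yields $\langle e', [S^2_F]\rangle = \langle e'', [S^2_F]\rangle$ on a generating set, forcing $e' = e''$.

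With all four pieces of characteristic data identified, Construction \ref{constrModelSpace} produces literally the same topological model space for both, and the identity map is the required $T$-equivariant homeomorphism. Composing with the homeomorphisms $X' \cong Y'$ and $Y'' \cong X''$ supplied by Proposition \ref{propHomeoToModel}, together with the identification induced by $f$, gives the desired equivariant homeomorphism $X' \cong X''$.
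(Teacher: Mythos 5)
Your proposal is correct, and its overall skeleton (reduce to the model spaces via Proposition \ref{propHomeoToModel}, recover $\mu'=\mu''$ from the local Euler classes on facet interiors, then upgrade the pointwise equality $e'_x=e''_x$ to $e'=e''$ using $Q\cong S^{n+1}$) is the same as the paper's. The one step you treat differently is the local-to-global argument for the Euler class. The paper works cohomologically: it assembles the local classes into an element of $H^3(U,U\setminus Z;\Zo^{n-1})$ by Mayer--Vietoris, passes to $H^3(Q,Q\setminus Z;\Zo^{n-1})$ by excision, and then uses the long exact sequence of the pair together with $H^2(S^{n+1})=H^3(S^{n+1})=0$ to see that this element lifts uniquely to $e\in H^2(Q\setminus Z;\Zo^{n-1})$. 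You argue dually: Alexander duality gives $H_2(Q\setminus Z)\cong \check{H}^{n-2}(Z)$, which is generated by the linking $2$-spheres of the facets, and $e$ is detected by its pairings with these spheres, which are exactly the local classes. The two arguments are essentially equivalent (both exploit that $Q$ is a sphere in an essential way), but your version silently uses that evaluation $H^2(Q\setminus Z;\Zo^{n-1})\to\Hom\bigl(H_2(Q\setminus Z),\Zo^{n-1}\bigr)$ is injective; this needs the $\mathrm{Ext}$ term from universal coefficients to vanish, which does hold here because $H_1(Q\setminus Z)\cong\check{H}^{n-1}(Z)=0$ since $\dim Z=n-2$, but it deserves a sentence. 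The paper's formulation via the exact sequence of the pair avoids this point, at the cost of the excision and Mayer--Vietoris bookkeeping. Everything else (the identification $e_x=\pm[\mu(F)]$ giving $\mu'=\mu''$, the transport of data along the homeomorphism of pairs, and the final appeal to the uniqueness of the model space) matches the paper.
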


\begin{proof}
Taking $x$ in the interior of a facet $F$ of a
sponge $Z'\cong Z''$, we see that $\mu'(F)=\mu''(F)$ since $e_x'$
is the fundamental class of $\mu'(F)$ and $e_x''$ is the
fundamental class of $\mu''(F)$. Hence $\mu'=\mu''$.

Let $(Q,Z)$ be either $(Q',Z')$ or $(Q'',Z'')$ and let
$U=\bigcup_{x\in Z}U_x$ be a neighborhood of $Z$ in $Q$. As
before, $U_x$ denotes a small neighborhood of $x\in Z$
homeomorphic to an open ball. The local classes $e_x$ determine
the classes $e_x'\in H^3(U_x,U_x\setminus Z;\Zo^{n-1})$ due to the
exact sequence
\[
\xymatrix{ H^2(U_x;\Zo^{n-1})\ar[r] \ar@{=}[d]& H^2(U_x\setminus
Z;\Zo^{n-1}) \ar[r] &
H^3(U_x,U_x\setminus Z;\Zo^{n-1}) \ar[r]& H^3(U_x;\Zo^{n-1})\ar@{=}[d]\\
0& e_x\ar@{|->}[r]& e_x'&0}
\]
The classes $\{e_x'\mid x\in Z\}$ determine a unique element
$e'\in H^3(U,U\setminus Z;\Zo^{n-1})$ such that $i_x^*(e')=e'_x$
for an inclusion $i_x\colon U_x\hookrightarrow U$ according to
Mayer--Vietoris argument. By excision, we can view $e'$ as an
element in $H^3(Q,Q\setminus Z;\Zo^{n-1})\cong H^3(U,U\setminus
Z;\Zo^{n-1})$. Recall that $Q\cong S^{n+1}$. From the exact
sequence
\[
\xymatrix{ H^2(Q;\Zo^{n-1})\ar[r] \ar@{=}[d]& H^2(Q\setminus
Z;\Zo^{n-1}) \ar[r] &
H^3(Q,Q\setminus Z;\Zo^{n-1}) \ar[r]& H^3(Q;\Zo^{n-1})\ar@{=}[d]\\
0& e\ar@{|->}[r]& e'&0}
\]
we extract a unique element $e\in H^2(Q\setminus Z;\Zo^{n-1})$
which projects to $e_x$ for any point $x\in Z$.

Characteristic data $(Q'\cong S^{n+1},Z',\mu',e')$ and $(Q''\cong
S^{n+1},Z'',\mu'',e'')$ coincide, hence the spaces $X'$ and $X''$
are equivariantly homeomorphic to the model space according to
Proposition \ref{propHomeoToModel}. Thus they are homeomorphic to
each other.
\end{proof}

\begin{rem}
Instead of the equality $e_x'=e_x''$ one can require the equality
of characteristic functions $\mu'=\mu''$, and for a small 2-sphere
around each facet $F$ specify the type of its preimage (whether it
is Hopf or anti-Hopf bundle, see Proposition \ref{propPMone}). If
the types agree for $X$ and $X'$ then equality $\mu'=\mu''$ would
imply equality of local classes $e_x'=e_x''$.
\end{rem}

In order to study certain examples, we need a modification of
Theorem \ref{thmClassSphere}. Let $M$ be a closed manifold of
dimension $n-1$. Assume there is a regular simple cell subdivision
on $M$ which means there is a given regular cell structure in
which every $k$-dimensional cell is contained in exactly $n-k$
maximal cells. Its $(n-2)$-skeleton $Z_M=M_{n-2}$ is a sponge.
Consider the manifold with boundary $Q_M=M\times D^2$. We consider
$M$ as a subset $M\times{0}\subset Q_M$.

\begin{prop}\label{propSpecialHomeo}
Let $(X,\dd X)$ be a $2n$-dimensional manifold with boundary, and
assume there is an appropriate action of $T=T^{n-1}$ on $X$ with
characteristic data $(Q_M,Z_M,\mu_M,e_M)$. We also assume that the
action is free on the boundary $\dd X$ and the principal
$T$-bundle $\dd X\to \dd X/T=\dd Q_M\cong M\times \dd D^2$ is
trivial. Then the class $e_M\in H^2(Q_m\setminus Z_M;\Zo^{n-1})$
is uniquely determined by the local classes $e_x$, $x\in Z_M$.
\end{prop}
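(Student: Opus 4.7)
My plan is to mimic the argument of Theorem~\ref{thmClassSphere}, replacing the vanishing $H^2(S^{n+1}) = H^3(S^{n+1}) = 0$ by an injectivity argument extracted from the boundary triviality hypothesis. Since existence of $e_M$ is part of the hypotheses (we have an actual manifold $X$), only uniqueness is at stake.

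First, as in the proof of Theorem~\ref{thmClassSphere}, each local class $e_x \in H^2(U_x\setminus Z_M;\Zo^{n-1})$ passes through the connecting homomorphism of the pair $(U_x, U_x\setminus Z_M)$ to yield $e_x' \in H^3(U_x, U_x\setminus Z_M;\Zo^{n-1})$, and these assemble by a Mayer--Vietoris argument on a tubular neighbourhood of $Z_M$, together with excision, into a unique global class $e' \in H^3(Q_M, Q_M\setminus Z_M;\Zo^{n-1})$ restricting to each $e_x'$. Now suppose $e^{(1)}, e^{(2)} \in H^2(Q_M\setminus Z_M;\Zo^{n-1})$ both induce the given family $\{e_x\}$ and both restrict to zero on $\partial Q_M$. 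They project to the same class $e'$, so by exactness of the long sequence of the pair $(Q_M, Q_M\setminus Z_M)$, the difference $\delta = e^{(1)} - e^{(2)}$ is the restriction of some class $a \in H^2(Q_M;\Zo^{n-1})$. Since $Z_M$ is disjoint from $\partial Q_M$, the restrictions of $a$ and of $\delta$ to $\partial Q_M$ coincide and therefore both vanish.

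The key observation is then that $H^2(Q_M;\Zo^{n-1}) \to H^2(\partial Q_M;\Zo^{n-1})$ is injective: the inclusion $\partial Q_M = M\times S^1 \hookrightarrow M\times D^2 = Q_M$ is homotopic to the composition $M\times S^1 \to M \hookrightarrow M\times D^2$, and the first-factor projection $p_1\colon M\times S^1\to M$ admits the section $x\mapsto (x,1)$, so $p_1^*$ (and hence the restriction map in question) is split injective. This forces $a = 0$ and hence $e^{(1)} = e^{(2)}$. The main place where care is required is the Mayer--Vietoris assembly of the classes $\{e_x'\}$ into $e'$; here it helps that the cell subdivision on $M$ is finite, so the cover $\{U_x\}$ can be chosen locally finite, and beyond that no new difficulty is expected compared to Theorem~\ref{thmClassSphere}.
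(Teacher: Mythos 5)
Your argument is correct, but it takes a genuinely different route from the paper. The paper works with the exact sequence of the pair $(Q_M\setminus Z_M,\dd Q_M)$: the boundary triviality is used to lift $e_M$ to a relative class $\tilde e\in H^2(Q_M\setminus Z_M,\dd Q_M;\Zo^{n-1})$, the quotient $(Q_M\setminus Z_M)/\dd Q_M$ is identified with $\Sigma^2(M\setminus Z_M)$ so that this relative group becomes $\Hr^0(M\setminus Z_M;\Zo^{n-1})$ (one copy of $\Zo^{n-1}$ per top-dimensional cell of $M$, reduced), and it is then asserted that such classes are ``localized near $Z_M$'' and hence read off from the local data. You instead reuse the pair $(Q_M,Q_M\setminus Z_M)$ from Theorem~\ref{thmClassSphere} verbatim, so the only new content is disposing of the indeterminacy $\ker\delta=\im\bigl(H^2(Q_M;\Zo^{n-1})\to H^2(Q_M\setminus Z_M;\Zo^{n-1})\bigr)$ --- which vanished for $Q\cong S^{n+1}$ but not for $Q_M\simeq M$ --- and you kill it with the boundary condition via the split injectivity of $H^2(M\times D^2;\Zo^{n-1})\to H^2(M\times S^1;\Zo^{n-1})$; that injectivity argument (section of the first-factor projection) is clean and correct, and the restrictions of $a$ and of $e^{(1)}-e^{(2)}$ to $\dd Q_M$ indeed agree because $\dd Q_M\subset Q_M\setminus Z_M$. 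Your version arguably makes the ``uniquely determined'' claim more explicit than the paper's localization assertion; its only remaining debt is the Mayer--Vietoris uniqueness of $e'$ in $H^3(U,U\setminus Z_M;\Zo^{n-1})$, which you correctly flag and which is exactly the step already taken for granted in the proof of Theorem~\ref{thmClassSphere}. Both proofs use the boundary triviality in an essential way: the paper uses it to produce a lift to a computable relative group, you use it to annihilate the ambient $H^2$.
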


\begin{proof}
There is an exact sequence of the pair $(Q_M\setminus Z_m,\dd
Q_M)$:
\[
\xymatrix{ H^2(Q_M\setminus Z_M,\dd Q_M;\Zo^{n-1})\ar[r] &
H^2(Q_M\setminus Z_M;\Zo^{n-1}) \ar[r] &
H^2(\dd Q_M;\Zo^{n-1}) 
}
\]
The class $e\in H^2(Q_M\setminus Z_M;\Zo^{n-1})$ maps to zero
since the free part of action is a trivial $T$-bundle over $\dd
Q$. Hence there exists $\tilde{e}\in H^2(Q_M\setminus Z_M,\dd
Q_M;\Zo^{n-1})$ which maps to $e$, and $e$ is uniquely determined
by the class $\tilde{e}$. We have
\[
(Q_M\setminus Z_M)/\dd Q_M\simeq \Sigma^2(M\setminus Z_M)
\]
hence $H^2(Q_M\setminus Z_M,\dd Q_M;\Zo^{n-1})\cong
\Hr^0(M\setminus Z_M)$. The space $M\setminus Z_M$ is the disjoint
union of open top-dimensional cells of $M$. It can be seen that
cohomology classes of $H^2(Q_M\setminus Z_M,\dd Q_M;\Zo^{n-1})$
are localized near $Z_M$ thus are completely determined by the
local classes.
\end{proof}

\begin{cor}
Under the assumptions of Proposition \ref{propSpecialHomeo}, the
equivariant homeomorphism type of $X$ is determined by $(Q_M,Z_M)$
and the weights of all tangent representations at all fixed
points.
\end{cor}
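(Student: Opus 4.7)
The plan is to reduce everything to data already known to determine $X$, namely the characteristic quadruple $(Q_M, Z_M, \mu_M, e_M)$, and then to show that the last two pieces $\mu_M$ and $e_M$ are recoverable from $(Q_M, Z_M)$ plus the tangent weights at fixed points. By Proposition \ref{propHomeoToModel}, $X$ is equivariantly homeomorphic to the model space $Y_{(Q_M,Z_M,\mu_M,e_M)}$, so it suffices to reconstruct the pair $(\mu_M, e_M)$ from the allowed data.

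First I would reconstruct $\mu_M$. At each fixed point $v$ the tangent weights $\alpha_1^v, \ldots, \alpha_n^v$ determine all $\binom{n}{2}$ facets of $Z_M$ incident to $v$: the facet $F_{i,j}$ adjacent to $v$ corresponds to the stratum where the coordinates in directions $i,j$ vanish, and its stabilizer is the $1$-dimensional subtorus $\bigcap_{k\neq i,j}\ker \alpha_k^v \subset T$, which is exactly $\mu_M(F_{i,j})$ up to sign. The adjoining condition guarantees that the closure of every facet contains at least one fixed point, so running this reading-off procedure at every vertex of $Z_M$ recovers $\mu_M$ on all facets of the sponge.

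Next I would reconstruct the local Euler classes $e_x$ for $x \in Z_M$. For $x$ in the interior of a facet $F_{i,j}$ adjacent to $v$, Proposition \ref{propPMone} gives $\langle e_M, [S_{i,j}^2]\rangle = (c_i/c_j)\mu_M(F_{i,j})$, and since the action is strictly appropriate Lemma \ref{lemPMone} forces $c_i/c_j = \pm 1$, with the sign computable directly from the ratio $\tilde c_i / \tilde c_j$ of minors of the weight matrix at $v$. For $x$ in a lower-dimensional face $F$, the group $H^2(U_x \setminus Z_M; H_1(T)) \cong \Zo^{n-k-1}\otimes H_1(T)$ is generated by the meridians of the facets containing $F$, so $e_x$ is assembled canonically from its restrictions to the adjacent facets, hence determined. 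Thus all local classes $\{e_x : x\in Z_M\}$ are recovered from the weights.

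Finally, Proposition \ref{propSpecialHomeo} tells us that under the present hypotheses the global Euler class $e_M \in H^2(Q_M\setminus Z_M; \Zo^{n-1})$ is uniquely determined by the collection $\{e_x\}_{x\in Z_M}$; combining this with the reconstruction of $\mu_M$ above shows that the full characteristic data $(Q_M, Z_M, \mu_M, e_M)$ is determined, and Proposition \ref{propHomeoToModel} then gives the equivariant homeomorphism type of $X$. The only subtle point — and what I would regard as the main obstacle if it were not already handled by Lemma \ref{lemPMone} and Proposition \ref{propPMone} — is the orientation/sign bookkeeping: the local Euler class at a facet point is only $\pm\mu(F)$, and one must verify that the sign is intrinsically read off from the weights (through the ratios $\tilde c_i/\tilde c_j$) rather than depending on an auxiliary choice, so that the global class $e_M$ assembled from the local pieces is genuinely canonical.
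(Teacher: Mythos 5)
Your proposal is correct and follows exactly the route the paper intends: the corollary is stated without proof immediately after Proposition \ref{propSpecialHomeo}, and the implicit argument is precisely your chain of recovering $\mu_M$ and the local classes $e_x$ from the tangent weights (via Lemma \ref{lemPMone} and Proposition \ref{propPMone}), invoking Proposition \ref{propSpecialHomeo} to pin down $e_M$, and concluding with Proposition \ref{propHomeoToModel}. Your explicit attention to the sign bookkeeping for $e_x$ is a welcome addition that the paper leaves to the orientation conventions of Section \ref{secOrientations}.
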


\begin{con}\label{constrDiscTimesMfd}
The examples of the actions above can be constructed in the
following way. We consider a manifold $P\cong M\times [0;1]$ with
boundary $\dd P=\dd_0P\sqcup\dd_1P$, $\dd_iP=M\times\{i\}$, and
endow it with the structure of a nice manifold with corners.
Namely, we subdivide the boundary component $\dd_0P$ according to
the subdivision of $M$ and do nothing with $\dd_1P$ (this boundary
component is considered a single face of dimension $n-1$). Now we
may take an abstract characteristic function satisfying
$(*)$-condition:
\[
\lambda\colon \{\mbox{facets of }\dd_0P\}\to \Hom(S^1,G)\cong
\Zo^n,
\]
and construct a topological manifold
\[
X=(P\times G)/\sim
\]
with boundary $\dd X=\dd_1P\times G$. Here $G\cong T^n$ and $\sim$
collapses tori over $\dd_0P$ according to characteristic function
(refer to \cite{DJ,Yo,BPnew} for details). These particular
manifolds with boundary were studied in \cite{AyzMexicana}.

We take a generic $(n-1)$-dimensional subtorus $T\subset G$ so
that the induced action of $T$ on $X$ is strictly appropriate and
in general position. It can be seen that the orbit space $Q=X/T$
is homeomorphic to
\[
Q\cong P\times S^1/\sim = (M\times[0;1])\times S^1/\sim
\]
where the circles collapse over $\dd_0P=M\times\{0\}$. Therefore
$Q\cong M\times D^2$. The sponge of the $T$-action is the
$(n-2)$-skeleton of $M=M\times \{0\}\subset M\times D^2$. Finally,
the free $T$-action over $M\times\dd D^2$ is a trivial bundle,
since the $G$-action is trivial over $\dd_1P$.
\end{con}

\section{Grassmann and flag manifolds}

Next we review two classical examples motivating our study.

\begin{ex}
The standard action of a compact torus $T^4$ on $\Co^4$ induces
the action of $T^4$ on a Grassmann manifold $G_{4,2}$ of complex
$2$-planes in $\Co^4$. This action has non-effective kernel
$\Delta(T^1)\subset T^4$, hence we have an effective action of
$T=T^4/\Delta(T^1)\cong T^3$ on $G_{4,2}$, $\dim_\Ro G_{4,2}=8$.
There are $6$ fixed points, and it is not difficult to find the
weights of their tangent representations. The easiest way to do
this is to look at the image of the moment map, which coincides
with a regular octahedron $\Delta_{4,2}$. Its vertices correspond
to the fixed points, and the primitive lattice vectors along the
edges of octahedron correspond to the weights of the tangent
representation. For example, the edges from the top vertex
$(0,0,1)$ of octahedron are
\[
\alpha_1=(1,0,-1),\quad \alpha_2=(0,1,-1),\quad
\alpha_3=(-1,0,-1),\quad \alpha_4=(0,-1,-1).
\]
Every $3$ of them are linearly independent, hence the action is in
general position. The action is strictly appropriate.

It was proved in \cite{BT}, that the orbit space $G_{4,2}/T$ is
homeomorphic to $S^5$. The sponge $Z$ of the action is obtained by
taking the boundary of octahedron $\dd\Delta_{4,2}$, and attaching
three squares along the equatorial cycles as shown on
Fig.\ref{figOctah}.

\begin{figure}[h]
\begin{center}
\includegraphics[scale=0.3]{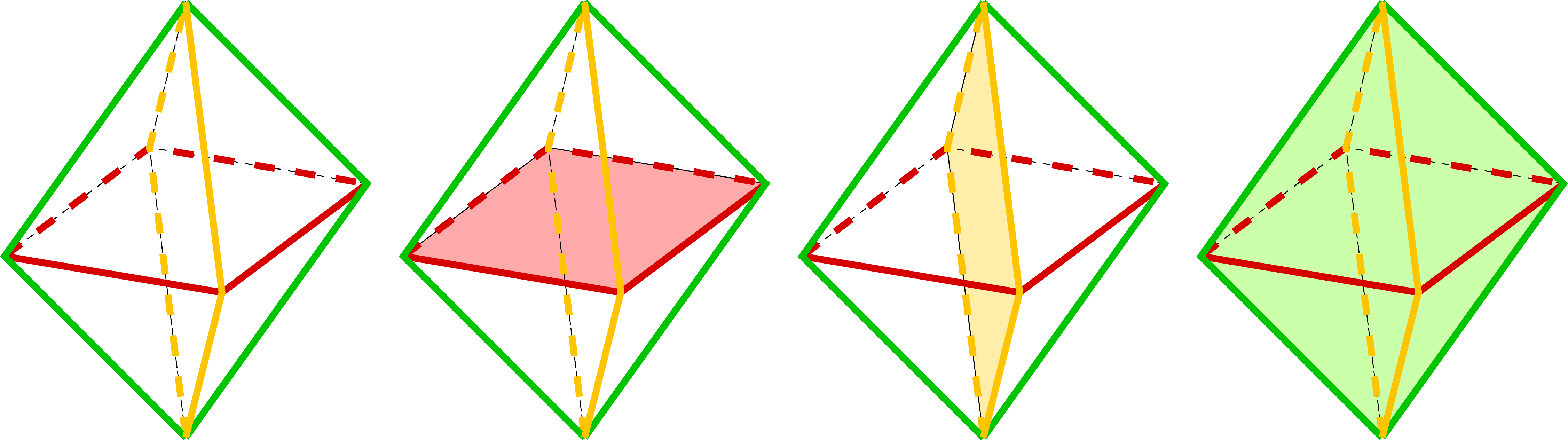}
\end{center}
\caption{The sponge of $G_{4,2}$ consists of the boundary of
octahedron with 3 squares attached along the
equators}\label{figOctah}
\end{figure}
\end{ex}

\begin{ex}
The standard action of $T^3$ on $\Co^3$ induces the effective
action of $T=T^3/\Delta(T^1)$ on the manifold $F_3$ complete
complex flags in $\Co^3$. We have $\dim T=2$, $\dim F_3=6$. There
are $6$ fixed points and the tangent representation at each point
is in general position. The action has no finite components in
stabilizers.

\begin{figure}[h]
\begin{center}
\includegraphics[scale=0.2]{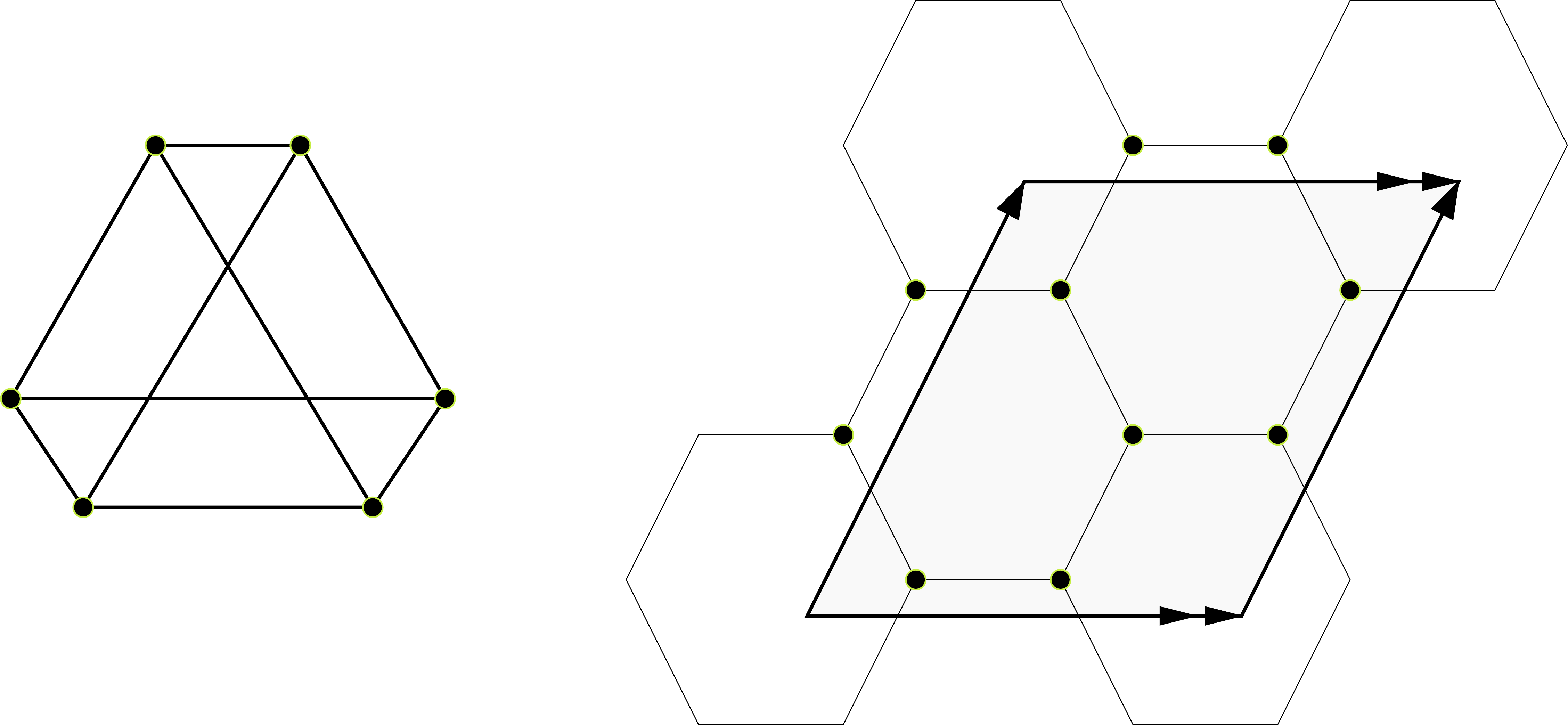}
\end{center}
\caption{The sponge of complete flag manifold
$F_3$}\label{figFlag}
\end{figure}

Using the technique of \cite{BT} (see \cite{AyzMatr} for
alternative proof) it can be shown that the orbit space $F_3/T$ is
homeomorphic to $S^4$. The sponge of the action has dimension $1$.
This is simply the GKM-graph of the action, which is well known.
This graph is shown on Fig.\ref{figFlag}. As an abstract graph it
is a complete bipartite graph $K_{3,3}$. The figure on the right
shows how to realize this graph as a 1-skeleton of simple cell
structure on a 2-torus $\T$. Actually, $\T$ can be embedded in
$S^4=F_3/T$ in a canonical way and the preimage of its small
neighborhood $U_\T$ under the projection map is described by
Construction \ref{constrDiscTimesMfd}. This subject will be
covered in detail in a subsequent paper \cite{AyzMatr}.
\end{ex}

Note the geometrical difference of these two examples from the
induced $T$-action on a quasitoric manifold. In case of $T$-action
on a quasitoric manifold, the sponge, which is an
$(n-2)$-dimensional complex, can be embedded in $S^{n-1}$ (since
it is the $(n-2)$-skeleton of a polytope). However the sponges of
$G_{4,2}$ and $F_3$ do not embed in a sphere as codimension one
complexes. In case of $F_3$ the graph $K_{3,3}$ is well-known to
be non-planar. The sponge of $G_{4,2}$, which is the octahedron
with $3$ squares attached, cannot be embedded in $\Ro^3$.

\begin{rem}
Whenever the orbit space $Q=X/T$ is a sphere $S^{n+1}$, Alexander
duality implies $H^2(Q\setminus Z; R)\cong H_{n-2}(Z;R)$ for a
sponge $Z\subset Q$. The homology class corresponding to $e\in
H^2(Q\setminus Z;H_1(T))$ is represented by the chain
\[
\sigma=\sum_{F :\mbox{ facet of }Z}e_x\cdot[F]\in
C_{n-2}(Z;H_1(T)).
\]
Here $[F]$ is the fundamental class of a facet $F$ and $e_x\in
H_2(U_x;H_1(T))\cong H_1(T)$ is the local Euler class in an
interior point $x\in F^\circ$. The chain $\sigma$ is a cycle
according to relation \eqref{eqCocycleReln}.
\end{rem}

\section{Acknowledgements}

The author deeply appreciates the help of Victor Buchstaber,
especially his advices and explanation concerning the theory of
$(2n,k)$-manifolds. The author thanks Mikiya Masuda for a very
geometrical explanation of Bialynicki--Birula theory.


\begin{thebibliography}{99}

\bibitem{AyzMexicana} A.\,Ayzenberg, \textit{Topological model for h''-vectors of simplicial
manifolds}, Bol. Soc. Mat. Mexicana (2016), pp.1--9 (preprint:
\href{http://arxiv.org/abs/1502.05499}{arXiv:1502.05499}).

\bibitem{AyzMatr} A.\,Ayzenberg, \textit{Space of isospectral periodic tridiagonal
matrices}, to appear.

\bibitem{BB} A. Bialynicki-Birula, \textit{Some theorems on actions
of algebraic groups}, Ann. of Math(2), 98:3 (1973), 480--497.

\bibitem{Br} G.E.Bredon, Introduction to Compact Transformation
Groups, Pure and Applied Mathematics V.46, 1972.

\bibitem{BuchRay} V. M. Buchstaber, N.Ray, \textit{An Invitation to Toric Topology: Vertex Four of a Remarkable
Tetrahedron}, Toric topology., Contemp. Math. 460, 2008, 1--27.

\bibitem{BTober} V. M. Buchstaber (joint with Svjetlana Terzi\'{c}),
\textit{(2n, k)-manifolds and applications}, Mathematisches
Forschung Institut Oberwolfach, Report No. 27/2014, p. 58, DOI:
10.4171/OWR/2014/27

\bibitem{BT} V.M. Buchstaber, S.Terzi\'{c}, \textit{Topology and
geometry of the canonical action of $T^4$ on the complex
Grassmannian $G_{4,2}$ and the complex projective space $\CP^5$},
Moscow Mathematical Journal, 16:2 (2016), 237--273 (preprint:
\href{https://arxiv.org/abs/1410.2482}{arXiv:1410.2482}).

\bibitem{BT2} V. M. Buchstaber, S. Terzi\'{c}, \textit{Toric topology of the complex Grassmann
manifolds}, preprint:
\href{https://arxiv.org/abs/1802.06449}{arXiv:1802.06449}

\bibitem{BPnew} V. Buchstaber, T. Panov, Toric
Topology, Math. Surveys Monogr., 204, AMS, Providence, RI, 2015.

\bibitem{ChL} Ph. T. Church, K.Lamotke, \textit{Almost free actions on manifolds}, Bull. Austral.
Math. Soc. 10 (1974), 177--196.

\bibitem{DJ} M.\,Davis, T.\,Januszkiewicz, \textit{Convex polytopes, Coxeter
orbifolds and torus actions}, Duke Math. J. 62:2 (1991), 417--451.

\bibitem{GKM} M. Goresky, R. Kottwitz, R. MacPherson, \textit{Equivariant
cohomology, Koszul duality, and the localization theorem}, Invent.
math. 131 (1998), 25--83.

\bibitem{Fint} R. Fintushel, 
\textit{Classification of circle actions on 4-manifolds}, Trans.
Amer. Math. Soc. 242 (1978), 377--390. 

\bibitem{KT} Y. Karshon, S. Tolman, \textit{Classification of Hamiltonian
torus actions with two-dimensional quotients}, Geom. Topol. 18:2
(2014), 669--716 (preprint
\href{https://arxiv.org/abs/1109.6873}{arXiv:1109.6873}).

\bibitem{MasPan} M. Masuda, T. Panov, \textit{On the cohomology of torus
manifolds}, Osaka J. Math. 43 (2006), 711--746 (preprint
\href{https://arxiv.org/abs/math/0306100}{arXiv:math/0306100}).

\bibitem{OR} P. Orlik, F. Raymond, \textit{Actions of the torus on 4-manifolds. I and II}, Trans. Amer.
Math. Soc. 152 (1970), 531--559, and Topology 13 (1974), 89--12.

\bibitem{Tim} D. A. Timash\"{e}v, \textit{G-varieties of complexity one}, (Russian) Uspekhi
Mat. Nauk 51:3, 309 (1996), 213--214; English translation:
Russian Math. Surveys 51 (1996), 567--568. 

\bibitem{Tim2} D. A. Timash\"{e}v, \textit{Classification of G-varieties of complexity
1}, (Russian) Izv. Ross. Akad. Nauk Ser. Mat. 61 (1997), 127--162;
English translation: Izv. Math. 61 (1997), 363--397. 

\bibitem{Vin} E. B. Vinberg, \textit{Discrete linear groups that are generated by reflections} (Russian).
Izvestija: Mathematics 35 (1971), 1072--1112.

\bibitem{Yo}
T. Yoshida, \textit{Local torus actions modeled on the standard
representation}, Adv. Math. 227 (2011), pp. 1914--1955 (preprint
\href{https://arxiv.org/abs/0710.2166}{arXiv:0710.2166}).

\end{thebibliography}
\end{document}